\documentclass[12pt]{amsart}

\usepackage{amssymb}
\usepackage{tikz}
\usepackage{xcolor}
\usepackage[margin=1.5in]{geometry}
\usepackage{comment}
\usepackage{amsrefs}
\usepackage{kbordermatrix}

\usepackage{multirow}
\usepackage{enumitem}
\usepackage{microtype}

\newtheorem{lemma}{Lemma}[section]
\newtheorem{theorem}[lemma]{Theorem}

\newtheorem{proposition}[lemma]{Proposition}
\theoremstyle{definition}
\newtheorem{definition}[lemma]{Definition}

\newtheorem{claim}[lemma]{Claim}
\newtheorem{example}[lemma]{Example}
\newtheorem{remark}[lemma]{Remark}
\newtheorem{property}[lemma]{Property}

\newtheorem{notation}[lemma]{Notation}

\theoremstyle{remark}

\newcommand{\abs}[1]{\left\lvert#1\right\rvert}
\renewcommand{\qq}[1]{\mathrm{mon}_#1}
\newcommand{\QQ}[1]{\mathrm{Mon}_#1}

\newcommand{\A}{\mathbb{A}}

\makeatletter
\newcommand{\superimpose}[2]{{\ooalign{$#1\@firstoftwo#2$\cr\hfil$#1\@secondoftwo#2$\hfil\cr}}}
\makeatother
\newcommand{\ttimes}{\hspace{0.4mm}{\mathpalette\superimpose{{\circ}{\cdot}}}\hspace{0.4mm}}
\newcommand{\tplus}{\mathrel{\oplus}}

\DeclareMathOperator{\Hilb}{Hilb}
\DeclareMathOperator{\inn}{in}

\DeclareMathOperator{\sgn}{sgn}
\DeclareMathOperator{\id}{id}
\DeclareMathOperator{\lcm}{lcm}
\DeclareMathOperator{\trop}{trop}

\usepackage[bookmarksnumbered=true]{hyperref}
\hypersetup{
  colorlinks = true,
  linkcolor = blue,
  anchorcolor = blue,
  citecolor = blue,
  filecolor = blue,
  urlcolor = blue
}

\begin{document}

\title[The spine of the $T$-graph]{The spine of the $T$-graph of the Hilbert scheme of points in the plane}

\author{Diane Maclagan}
\author{Rob Silversmith}
\address{Mathematics Institute\\ University of Warwick\\ Coventry, CV4 7AL \\ United Kingdom}
\email{D.Maclagan@warwick.ac.uk}
\email{Rob.Silversmith@warwick.ac.uk}

\begin{abstract}
  The torus $T$ of projective space also acts on the Hilbert scheme of
  subschemes of projective space. The $T$-graph of the Hilbert scheme
  has vertices the fixed points of this action, and edges connecting
  pairs of fixed points in the closure of a one-dimensional orbit.  In
  general this graph depends on the underlying field. We construct a
  subgraph, which we call the spine, of the $T$-graph of
  $\Hilb^m(\mathbb A^2)$ that is independent of the choice of infinite
  field.  For certain edges in the spine we also give a description of
  the tropical ideal, in the sense of tropical scheme theory, of a
  general ideal in the edge. This gives a more refined understanding
  of these edges, and of the tropical stratification of the Hilbert
  scheme.
\end{abstract}  

\maketitle

\section{Introduction}

The torus $T \cong (K^*)^n$ of $\mathbb P^n$ acts on the Hilbert
scheme $\Hilb_P(\mathbb P^n)$ of subschemes of $\mathbb P^n$.  There
are finitely many fixed points of this action, but infinitely many
one-dimensional orbits.  The $T$-graph of the Hilbert scheme has
vertices the fixed points of the $T$-action. 
There is an edge between two vertices if there is a one-dimensional
$T$-orbit containing a $K$-rational point whose closure contains these
two vertices.
The $T$-graph provides a combinatorial skeleton of the Hilbert scheme;
for example, the proof that $\Hilb_P(\mathbb P^n)$ is connected given
by Peeva and Stillman \cite{PeevaStillman} proceeds by showing the
Borel-fixed subgraph of this graph is connected (the original proof by
Hartshorne \cite{Hartshorne} has some moves which, while
combinatorial, leave this graph). The $T$-graph of the Hilbert scheme
was first systematically studied by Altmann-Sturmfels
\cite{AltmannSturmfels}, who gave an algorithm to compute it using
Gr\"obner bases, and was studied combinatorially by Hering-Maclagan
\cite{HeringMaclagan}. More generally, $T$-graphs arise in GKM theory
\cite{GKM}, where they are used to give a presentation of the
equivariant cohomology ring of a variety with $T$-action.

  The $T$-graph of the Hilbert scheme $\Hilb^4(\mathbb A^2)$ of $4$
points in $\mathbb A^2$ is shown on the left of
Figure~\ref{f:firsteg}.
Note that a single edge
may correspond to multiple one-dimensional $T$-orbits, or even to a
positive-dimensional family of them.

\begin{figure}
  \centering
  \begin{tikzpicture}[xscale=1.5,yscale=0.75]
    \draw (0,2) node {\begin{tikzpicture}[scale=0.3]
        \foreach \x in {0,1,...,3} {
          \draw (\x,0)--(\x+1,0)--(\x+1,1)--(\x,1)--cycle;
        };
      \end{tikzpicture}};
    \draw (-2,1) node {\begin{tikzpicture}[scale=0.3]
        \foreach \x in {0,1,...,2} {
          \draw (\x,0)--(\x+1,0)--(\x+1,1)--(\x,1)--cycle;
        };
        \foreach \x in {0} {
          \draw (\x,1)--(\x+1,1)--(\x+1,2)--(\x,2)--cycle;
        };
      \end{tikzpicture}};
    \draw (-1,0) node {\begin{tikzpicture}[scale=0.3]
        \foreach \x in {0,1} {
          \draw (\x,0)--(\x+1,0)--(\x+1,1)--(\x,1)--cycle;
        };
        \foreach \x in {0,1} {
          \draw (\x,1)--(\x+1,1)--(\x+1,2)--(\x,2)--cycle;
        };
      \end{tikzpicture}};
    \draw (-2,-1) node {\begin{tikzpicture}[scale=0.3]
        \foreach \x in {0,1} {
          \draw (\x,0)--(\x+1,0)--(\x+1,1)--(\x,1)--cycle;
        };
        \foreach \x in {0} {
          \draw (\x,1)--(\x+1,1)--(\x+1,2)--(\x,2)--cycle;
        };
        \foreach \x in {0} {
          \draw (\x,2)--(\x+1,2)--(\x+1,3)--(\x,3)--cycle;
        };
      \end{tikzpicture}};
    \draw (0,-2) node {\begin{tikzpicture}[scale=0.3]
        \foreach \y in {0,1,...,3} {
          \draw (0,\y)--(1,\y)--(1,\y+1)--(0,\y+1)--cycle;
        };
      \end{tikzpicture}};
    \draw[shorten >=0.5cm,shorten <=1cm,very thick] (0,2)--(-2,1);
    \draw[shorten >=0.5cm,shorten <=0.5cm,very thick] (0,2)--(-1,0);
    \draw[shorten >=1cm,shorten <=0.5cm,very thick] (0,2)--(0,-2);
    \draw[shorten >=0.5cm,shorten <=0.75cm,very thick] (-2,1)--(-1,0);
    \draw[shorten >=.5cm,shorten <=0.4cm,very thick] (-2,1)--(-2,-1);
    \draw[shorten >=0.5cm,shorten <=0.5cm,very thick] (-1,0)--(-2,-1);
    \draw[shorten >=0.9cm,shorten <=0.5cm,very thick] (-1,0)--(0,-2);
    \draw[shorten >=0.5cm,shorten <=0.5cm,very thick] (-2,-1)--(0,-2);
    \foreach \th in {0.1,0.15,...,0.6} {
          \draw[lightgray] plot[smooth,tension=1.2] coordinates {(-2+\th/2,-.35-\th/2)
            (-2+\th,0) (-2+\th/1.4,.5)};
        };
      \end{tikzpicture}
      \quad\quad\quad
      \begin{tikzpicture}[xscale=1.5,yscale=0.75]
    \draw (0,2) node {\begin{tikzpicture}[scale=0.3]
        \foreach \x in {0,1,...,3} {
          \draw (\x,0)--(\x+1,0)--(\x+1,1)--(\x,1)--cycle;
        };
      \end{tikzpicture}};
    \draw (-2,1) node {\begin{tikzpicture}[scale=0.3]
        \foreach \x in {0,1,...,2} {
          \draw (\x,0)--(\x+1,0)--(\x+1,1)--(\x,1)--cycle;
        };
        \foreach \x in {0} {
          \draw (\x,1)--(\x+1,1)--(\x+1,2)--(\x,2)--cycle;
        };
      \end{tikzpicture}};
    \draw (-1,0) node {\begin{tikzpicture}[scale=0.3]
        \foreach \x in {0,1} {
          \draw (\x,0)--(\x+1,0)--(\x+1,1)--(\x,1)--cycle;
        };
        \foreach \x in {0,1} {
          \draw (\x,1)--(\x+1,1)--(\x+1,2)--(\x,2)--cycle;
        };
      \end{tikzpicture}};
    \draw (-2,-1) node {\begin{tikzpicture}[scale=0.3]
        \foreach \x in {0,1} {
          \draw (\x,0)--(\x+1,0)--(\x+1,1)--(\x,1)--cycle;
        };
        \foreach \x in {0} {
          \draw (\x,1)--(\x+1,1)--(\x+1,2)--(\x,2)--cycle;
        };
        \foreach \x in {0} {
          \draw (\x,2)--(\x+1,2)--(\x+1,3)--(\x,3)--cycle;
        };
      \end{tikzpicture}};
    \draw (0,-2) node {\begin{tikzpicture}[scale=0.3]
        \foreach \y in {0,1,...,3} {
          \draw (0,\y)--(1,\y)--(1,\y+1)--(0,\y+1)--cycle;
        };
      \end{tikzpicture}};
    \draw[shorten >=0.5cm,shorten <=1cm,very thick] (0,2)--(-2,1);
    \draw[shorten >=0.5cm,shorten <=0.5cm,very thick] (0,2)--(-1,0);
    \draw[shorten >=1cm,shorten <=0.5cm,very thick] (0,2)--(0,-2);
    \draw[shorten >=.5cm,shorten <=0.4cm,very thick] (-2,1)--(-2,-1);
    \draw[shorten >=0.9cm,shorten <=0.5cm,very thick] (-1,0)--(0,-2);
    \draw[shorten >=0.5cm,shorten <=0.5cm,very thick] (-2,-1)--(0,-2);
  \end{tikzpicture}
  \caption{The $T$-graph and its spine when $N=4$. The gray curves
    illustrate the fact that the edge from $\langle{x^3,xy,y^2}\rangle$ to
    $\langle{x^2,xy,y^3}\rangle$ corresponds to a one-dimensional family of
    $T$-orbits; taking a limit, the $T$-orbits degenerate into the
    union of two orbits, corresponding to the edges from
    $\langle{x^3,xy,y^2}\rangle$ to $\langle{x^2,y^2}\rangle$ and from $\langle{x^2,y^2}\rangle$ to
    $\langle{x^2,xy,y^3}\rangle$.}
  \label{f:firsteg}
\end{figure}
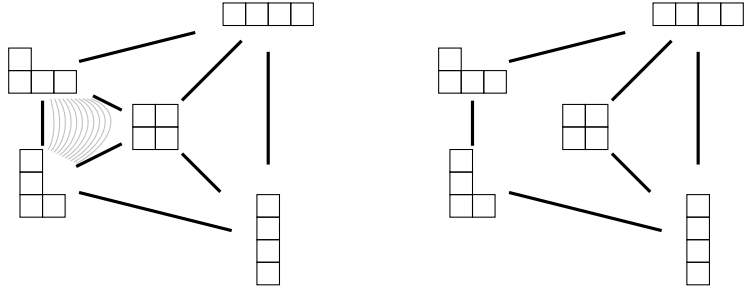

An additional complexity is given by the fact that the graph depends
on the underlying field; the $T$-graph of $\Hilb^{10}(\mathbb A^2)$
differs for $K=\mathbb Q$ and $K=\mathbb R$; see
\cite{HeringMaclagan}*{Example 2.11} and
\cite{SilversmithTropicalIdeal}*{Theorem 5.11}.

The first result of this paper is the construction of a subgraph of
the $T$-graph of the Hilbert scheme $\Hilb^N(\mathbb A^2)$ that does
not depend on the underlying field $K$, provided $K$ is infinite.

A $K$-rational point of $\Hilb^N(\mathbb A^2)$ is a subscheme of
$\mathbb A^2$ of length $N$, given by an ideal $I \subseteq S:=K[x,y]$
with $\dim_K S/I = N$. Such an ideal $I$ is a fixed point of the
$T$-action if and only if it is a monomial ideal; these ideals are in
bijection with Young diagrams with $N$ boxes, with boxes corresponding
to monomials not in $I$.  A non-monomial ideal $I$ lies on a
one-dimensional orbit if and only if $I$ is homogeneous with respect
to a grading by $\deg(x)=a$ and $\deg(y)=b$; the subscheme of $\mathbb
A^2$ defined by $I$ is stabilized by the subtorus $\{ ((t^a,t^b) : t
\in K^* \} \subseteq T$.  There are two $T$-fixed points in the
closure of the orbit, so $I$ corresponds to an edge of the $T$-graph,
if and only if $ab>0$.

In this latter case, denote by $h: \mathbb Z_{\geq 0} \rightarrow \mathbb Z_{\geq 0}$
the Hilbert function of $I$ with respect to this grading: $h(d) :=
\dim_K (S/I)_d$.
Then $I$ lies on the multigraded Hilbert scheme
$\Hilb^h_S\subseteq\Hilb^N(\mathbb{A}^2)$ parametrizing homogeneous
ideals in $S$ with Hilbert function $h$ \cite{HaimanSturmfels}.  This
multigraded Hilbert scheme is a $T$-invariant closed subscheme of
$\Hilb^N(\mathbb{A}^2)$, is smooth and irreducible
\cite{Evain}*{Theorem 1} \cite{MaclaganSmith}*{Theorem 1.1}, and has
two distinguished $T$-fixed points: the ``lex-most'' and ``lex-least''
monomial ideals. See Section~\ref{s:spine} and in particular Figure
\ref{fig:GradedDominanceOrder} for more details.

\begin{definition} \label{d:spine}
  The {\em spine} $G_N^*$ of the $T$-graph of $\Hilb^N(\mathbb A^2)$
  is the graph with vertices the $T$-fixed points of
  $\Hilb^N(\mathbb A^2)$, and an edge between two monomial ideals if
  they are the lex-most and lex-least ideals of $\Hilb_S^h$ with
  respect to some grading and Hilbert function.
\end{definition}

Studying the spine was suggested in Remark 4.7 of
\cite{HeringMaclagan}.  Every one-dimensional $T$-orbit corresponding
to an edge of the $T$-graph is in the closure of the set of $T$-orbits
corresponding to edges in the spine.  The spine for $\Hilb^4(\mathbb
A^2)$ is shown on the right in Figure~\ref{f:firsteg}.  Let $G_{N}(K)$
denote the $T$-graph of $\Hilb^{N}(\mathbb A^2)$ over a field $K$.
Our first theorem is the following.

\begin{theorem} \label{t:spineedge} For any infinite field $K$,
  $G_N^*$ is a subgraph of $G_{N}(K)$; that is, if $M^-$ and $M^+$ are
  the lex-least and lex-most monomial ideals with respect to some
  grading and Hilbert function, then there exists an ideal
  $I\subseteq K[x,y]$, homogeneous with respect to this grading and
  Hilbert function, such that the closure of the $T$-orbit of $I$
  contains $M^-$ and $M^+.$
\end{theorem}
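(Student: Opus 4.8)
The plan is to construct the ideal $I$ explicitly by connecting $M^-$ and $M^+$ via a chain of elementary moves inside the smooth irreducible multigraded Hilbert scheme $\Hilb_S^h$, and then to promote this chain to a single $T$-orbit by a general-position argument. First I would analyze the combinatorics of the grading: given $\deg(x)=a$, $\deg(y)=b$ with $ab>0$ and the Hilbert function $h$, each graded piece $(S/I)_d$ is spanned by monomials $x^iy^j$ with $ai+bj=d$, and these lie along a line, so in each degree $d$ the monomial ideals with Hilbert function $h$ correspond to choices of which $h(d)$ of the available monomials to keep; the lex-most ideal $M^+$ keeps, in each degree, the $h(d)$ monomials with largest $x$-exponent (equivalently smallest $y$-exponent), and $M^-$ keeps those with smallest $x$-exponent. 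I would phrase the whole problem degree by degree, since the grading decouples the Hilbert scheme into a product over degrees of (partial) flag-type or Grassmannian-type pieces.

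The key step is then to build, in each degree $d$ where $M^-$ and $M^+$ differ, a one-parameter family of subspaces $V_d(t)\subseteq S_d$ of dimension $h(d)$ that limits to the $M^+$-piece as $t\to 0$ and to the $M^-$-piece as $t\to\infty$, \emph{and} that is compatible with multiplication by $x$ and $y$ so that $\bigoplus_d V_d(t)$ is actually an ideal for all $t\in K^*$. The natural choice is to take $V_d(t)$ spanned by binomials of the form $x^{i}y^{j} - t^{w(i,j)} x^{i'}y^{j'}$ sliding each ``kept'' monomial of $M^-$ toward the corresponding ``kept'' monomial of $M^+$; the subtorus $\{(t^a,t^b)\}$ scales such a binomial by a single power of $t$ precisely because the two monomials have the same degree, so the family is automatically a $T$-orbit (or lies in the closure of one) provided the exponents $w(i,j)$ are chosen with the right monotonicity. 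I would choose the $w(i,j)$ by a careful induction on degree so that the ideal membership conditions — that $x\cdot V_d(t)\subseteq V_{d+a}(t)$ and $y\cdot V_d(t)\subseteq V_{d+b}(t)$ — hold identically in $t$; here the smoothness and irreducibility of $\Hilb_S^h$ (Evain; Maclagan--Smith) guarantees that the locus of such ideals is nonempty and that a generic point of it has a $0$- and $\infty$-limit equal to $M^+$ and $M^-$ respectively, which is really what forces the construction to close up.

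The main obstacle I anticipate is verifying the ideal (closure-under-multiplication) conditions simultaneously across all degrees: sliding a monomial in degree $d$ forces, via multiplication by $x$ and $y$, corresponding slides in degrees $d+a$ and $d+b$, and one must check that these induced requirements are mutually consistent and do not over-determine the weights $w(i,j)$ — equivalently, that the ``staircase path'' from the Young diagram of $M^+$ to that of $M^-$ can be realized by a coherent system of binomial relations. I would handle this by choosing a total order on the boxes that get added/removed (refining the graded-lex order that defines lex-most and lex-least) and peeling off one box at a time: at each stage the intermediate monomial ideal differs from the previous one by a single box, the corresponding binomial move lives on an edge of the $T$-graph of an intermediate multigraded Hilbert scheme, and concatenating these moves in a single family is possible because each is governed by one free parameter and the $T$-weights are all equal along the fixed subtorus. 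Finally, I would invoke irreducibility of $\Hilb_S^h$ once more: the constructed family gives a curve in $\Hilb_S^h$ through a general point with the prescribed two limits, so a general ideal $I$ on that curve is $T$-homogeneous with Hilbert function $h$ and has $\overline{T\cdot I}\ni M^-,M^+$, which is exactly the claim; since $K$ is infinite, such a general $K$-rational $I$ exists.
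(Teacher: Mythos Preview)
Your proposal has a genuine gap, and it is also substantially more complicated than what is needed.

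The central problem is the ``concatenation'' step. You correctly note that one can pass from $M^-$ to $M^+$ by a chain of single-box moves, each of which corresponds to an edge in the $T$-graph. But a chain of edges is a \emph{path}, not a single edge: the intermediate nodes are monomial ideals, and the one-dimensional $T$-orbits along the way are distinct orbits with distinct closures. Your justification for merging them into one family --- ``each is governed by one free parameter and the $T$-weights are all equal along the fixed subtorus'' --- does not do this; every nontrivial one-dimensional orbit in $\Hilb_S^h$ is stabilized by the same subtorus $T_{a,b}$, so that condition carries no information about whether two orbits can be deformed into one. Likewise, the final sentence (``the constructed family gives a curve in $\Hilb_S^h$ through a general point'') is not supported by anything you have built: your chain passes through very special (monomial) points, not a general one. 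The explicit binomial ansatz you propose for $V_d(t)$ also does not in general produce an ideal; the closure conditions $x\cdot V_d(t)\subseteq V_{d+a}(t)$ and $y\cdot V_d(t)\subseteq V_{d+b}(t)$ impose genuine constraints that a simple choice of exponents $w(i,j)$ need not satisfy, and you have not shown they can be. (Incidentally, the multigraded Hilbert scheme does \emph{not} decouple into a product over degrees, for exactly this reason; you acknowledge this later, but the initial framing is misleading.)

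The paper's argument bypasses all of this with a single observation from Bia\l{}ynicki-Birula theory. Since $\Hilb_S^h$ is smooth and irreducible and $M^-$ is the unique \emph{sink} (respectively $M^+$ the unique \emph{source}) of the $T/T_{a,b}$-action, the cell $C_\prec(M^-)=\{I:\inn_\prec(I)=M^-\}$ and the cell $C_{\prec^{\mathrm{opp}}}(M^+)=\{I:\inn_{\prec^{\mathrm{opp}}}(I)=M^+\}$ are both Zariski \emph{open} and isomorphic to affine space. Two nonempty open subsets of an irreducible variety intersect, and an open subset of affine space over an infinite field has a $K$-rational point. Any such point $I$ has initial ideals $M^-$ and $M^+$ under the two opposite orders, which is exactly the statement that the closure of its $T$-orbit contains both. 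You brush past this idea in your middle paragraph (``smoothness and irreducibility \ldots\ guarantees that the locus of such ideals is nonempty and that a generic point of it has a $0$- and $\infty$-limit equal to $M^+$ and $M^-$''), but you never isolate \emph{why} the generic point has those particular limits --- it is precisely because the source and sink cells are open --- and instead surround it with an unnecessary and incomplete explicit construction.
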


Our second result, Theorem \ref{t:tropicalideal} below, refines
Theorem \ref{t:spineedge} for some edges by describing matroidal aspects of
$\Hilb_S^h$ coming from tropical scheme theory. We now describe what
we mean by this; for precise definitions, see Section 
\ref{sec:Tropicalization}. 

The {\em tropicalization} $\trop(I)$ of an ideal $I \subseteq K[x,y]$
is the ideal in the semiring of tropical polynomials obtained by
tropicalizing every polynomial in the ideal.  This is an example of a
tropical ideal in the sense of tropical scheme theory
\cites{Giansiracusa2, TropicalIdeals, MaclaganRinconValuations, Balancing}.
When $I$ is homogeneous, each degree-$d$ part of $\trop(I)$  determines a {\em matroid} $\mathcal{M}(I_d)$ on the set $\QQ{d}$ of degree-$d$ monomials.

This construction induces a \emph{tropical stratification} of
$\Hilb_S^h$; two ideals are in the same stratum if and only if their
tropicalizations coincide.  This can be thought of as a generalization
of the matroid stratification of the Grassmannian \cite{GGMS}.  Very
little is known about the tropical stratification;  see
\cites{SilversmithTropicalIdeal,FinkGiansiracusaGiansiracusa}.

When $n=2$, and the grading is the standard one $\deg(x)=\deg(y)=1$, the Hilbert scheme $\Hilb_S^h$ is irreducible \cite{Evain,MaclaganSmith}, and
hence has a unique open (largest) stratum. Our second main theorem,
Theorem \ref{t:tropicalideal} below, describes this stratum; in other
words, it describes the tropicalization of a general ideal
$I$ in $\Hilb_S^h.$

\begin{theorem} \label{t:tropicalideal}
  Let $S = K[x,y]$ be graded by $\deg(x)=\deg(y)=1$.
  For any $d \geq 0$, the degree-$d$ matroid $\mathcal{M}(I_d)$ of a
  general ideal $I$ in $\Hilb_S^h$ is the uniform matroid
  $U_{h(d),d+1}.$ Furthermore, $I$ can be taken to be a $K$-rational
  point of $\Hilb_S^h$, provided $K$ is infinite.
\end{theorem}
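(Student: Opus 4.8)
The plan is to translate the matroid condition into transversality of linear subspaces, reduce to exhibiting a single well-behaved ideal using irreducibility of $\Hilb_S^h$, and then produce such an ideal either by a generic change of coordinates applied to a monomial ideal or by building the ideal up one degree at a time.

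First I would record the elementary translation. Since $\dim_K I_d = (d+1)-h(d)$, the matroid $\mathcal M(I_d)$ on the set $\QQ{d}$ of degree-$d$ monomials has rank $h(d)$, and one checks directly that
\[
\mathcal M(I_d)=U_{h(d),\,d+1}\quad\Longleftrightarrow\quad I_d\cap\Span(B)=0\ \text{for every }B\subseteq\QQ{d}\text{ with }|B|=h(d),
\]
i.e.\ no nonzero element of $I_d$ is supported on at most $h(d)$ monomials. As $I_0=0$ and $I_d=S_d$ for all sufficiently large $d$, this holds automatically in all but finitely many degrees. In each of the remaining degrees it is an \emph{open} condition on $\Hilb_S^h$: the assignment $I\mapsto I_d$ is a morphism $\Hilb_S^h\to\mathrm{Gr}((d+1)-h(d),\,S_d)$, under which the bad locus is the preimage of the Schubert variety $\{L:L\cap\Span(B)\ne 0\}$. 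Intersecting over the finitely many relevant degrees defines an open set $\mathcal U\subseteq\Hilb_S^h$ on which $\mathcal M(I_e)=U_{h(e),e+1}$ holds in every degree, and since $\Hilb_S^h$ is irreducible \cite{Evain,MaclaganSmith} it suffices to show $\mathcal U\ne\emptyset$.

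To exhibit a point of $\mathcal U$ I would use the $\mathrm{GL}_2$-action on $\Hilb_S^h$, which is available because the grading is standard. Fix any monomial ideal $M\in\Hilb_S^h$ (for instance the lex-most one) and write $M_d=\Span\{x^{d-j}y^j:j\in J_d\}$. For $g\in\mathrm{GL}_2$ the ideal $gM$ lies in $\Hilb_S^h$ and $(gM)_d=g\cdot M_d$, where $g$ acts on $S_d$ through $\mathrm{Sym}^d$. For fixed $d$ and $B$ the set of $g$ with $g\cdot M_d\cap\Span(B)\ne 0$ is closed in $\mathrm{GL}_2$; since $\mathrm{GL}_2$ is irreducible and $\mathrm{GL}_2(K)$ is Zariski-dense for $K$ infinite, it suffices to produce, for every relevant $d$ and every $B$ with $|B|=h(d)$, \emph{some} $g\in\mathrm{GL}_2$ with $g\cdot M_d\cap\Span(B)=0$. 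A $K$-rational $g$ lying outside the resulting finite union of proper closed subsets then gives a $K$-rational ideal $I=gM\in\mathcal U$, which also yields the final assertion of the theorem.

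The crux, and the main obstacle, is this last transversality statement. Writing $gx=\ell_1$, $gy=\ell_2$ for the corresponding basis of $S_1$, it asks that for a suitable $(\ell_1,\ell_2)$ the forms $\{\ell_1^{d-j}\ell_2^j:j\in J_d\}$ span a complement to the coordinate subspace $\Span(B)$. It is convenient to isolate a clean form of the difficulty: \emph{if $V\subseteq K[x,y]_e$ has uniform matroid with respect to the monomial basis, then so does $xV+yV\subseteq K[x,y]_{e+1}$} --- equivalently, dualizing by Macaulay's inverse systems, the same statement with $\partial_x,\partial_y$ in place of $x,y$. Granting this, there is also a more hands-on construction of a point of $\mathcal U$: build the ideal up one degree at a time, starting from a generic (hence matroid-uniform) subspace in the initial degree, and use the lemma to propagate matroid-uniformity from $S_1\cdot I_{d-1}$ to $I_d$, noting that adjoining general vectors preserves uniformity since a general vector avoids each of the finitely many relevant proper subspaces. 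The lemma itself I would prove by a direct computation with the maximal minors of the evident block matrix representing $xV+yV$: Laplace expansion separates the ``$x$-rows'' from the ``$y$-rows'' and exhibits each such minor as a polynomial in the Plücker coordinates of $V$, whose non-vanishing one checks by specializing $V$ to a sufficiently generic subspace (for instance one spanned by suitable powers of linear forms, where the minors become generalized Vandermonde determinants). I expect the bookkeeping in this last step --- tracking which minors survive for an arbitrary ``shape'' $J_d$ and an arbitrary target $B$ --- to be the most delicate point.
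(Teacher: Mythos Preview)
Your reduction to non-emptiness of an open set on the irreducible scheme $\Hilb_S^h$ is correct and matches the paper. But both of your proposed constructions of a point of $\mathcal U$ have genuine gaps.

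The $\mathrm{GL}_2$ route fails in positive characteristic, which the theorem must cover. In characteristic~$3$, take $M=\langle x^2,y^N\rangle$ (any $N\ge4$); then for every $g\in\mathrm{GL}_2$ one has $(gM)_3=\ell_1^2\, S_1$ where $\ell_1=gx$, and Frobenius gives $\ell_1^3\in\Span\{x^3,y^3\}$. Hence $(gM)_3\cap\Span\{x^3,y^3\}\ne0$ for all $g$, and since $h(3)=2$ no point of the $\mathrm{GL}_2$-orbit of $M$ lies in $\mathcal U$. The same obstruction hits $M^+=\langle x^N,y^2\rangle$. Your alternative lemma, that $V$ uniform implies $xV+yV$ uniform, is also false: $V=\Span\{x^2+xy+y^2\}$ is uniform, yet $xV+yV\ni (x-y)(x^2+xy+y^2)=x^3-y^3$, so $\{x^3,y^3\}$ is not a basis of the rank-$2$ matroid. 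Weakening to ``generic $V$'' does not rescue the degree-by-degree induction either: after the first step $I_d$ is constrained to contain $S_1 I_{d-1}$ and is no longer a generic point of its Grassmannian, so the statement you would then need is precisely the theorem itself.

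The paper's argument is quite different and characteristic-free. It works inside Evain's explicit polynomial parameterization of the cell $C_\prec(M^-)\cong\mathbb A^{|T^+(M^-)|}$, writes each maximal minor of the degree-$d$ Macaulay matrix as a polynomial in the coordinates $c_i^\ell$, and exhibits, via a combinatorial analysis of ``direct paths'', a single monomial that occurs in that minor with coefficient exactly~$1$. This coefficient-$1$ conclusion is what makes the proof go through over every field; no $\mathrm{GL}_2$-symmetry is invoked.
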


Theorem \ref{t:tropicalideal} fails in the nonstandard grading;
see Section~\ref{sec:Nonstandard}.

There are comparatively few explicit examples of tropical ideals; see
\cites{Zajaczkowska,AndersonRincon}.  One important aspect of
Theorem~\ref{t:tropicalideal} is thus that it provides a large class
of new examples for which all matroids are understood.

Theorem \ref{t:tropicalideal} refines Theorem \ref{t:spineedge}
as follows. For a fixed grading and Hilbert function, the ideals
$I\subseteq K[x,y]$ whose orbit contains $M^-$ and $M^+$ comprise an
open set $U_1\subseteq\Hilb_S^h$, which is nonempty by Theorem
\ref{t:spineedge}. Meanwhile, the ideals $I\subseteq K[x,y]$ such that
the conclusion of Theorem \ref{t:tropicalideal} holds for all $d\ge0$
also comprise a nonempty open set $U_2\subseteq\Hilb_S^h$, and we have
the containment $U_2\subseteq U_1$; see Remark
\ref{rem:InitialRowReduction}.

The structure of this paper is as follows.  In Section~\ref{s:spine}
we give more precise definitions of the main objects of study, and
prove Theorem~\ref{t:spineedge}.  Theorem~\ref{t:tropicalideal} 
is proved in
Section~\ref{s:tropicalideal}.

\noindent {\bf Acknowledgements.}  Maclagan was partially supported by
EPSRC grant EP/R02300X/1. Silversmith was supported by NSF DMS-1645877
and by a Zelevinsky postdoctoral fellowship at Northeastern
University.  Some background calculations were done using Macaulay2
\cite{M2}.

\section{The spine of the \texorpdfstring{$T$}{T}-graph} \label{s:spine}

In this section we recall previous work on the $T$-graph, and prove
 Theorem~\ref{t:spineedge}.

Let $K$ be an infinite field. Recall that a $K$-rational point of $\Hilb^N(\mathbb A^2)$ is given by
an ideal $I \subseteq S:=K[x,y]$ with $\dim_K(S/I) = N$. The
$T \cong (K^*)^2$ action on $\mathbb A^2$ induces a $T$-action on
$\Hilb^N(\mathbb A^2)$. Such an ideal is a fixed point of the
$T \cong (K^*)^2$ action on $\Hilb^N(\mathbb A^2)$ if and only if it
is a monomial ideal, and lies on a one-dimensional $T$-orbit if and
only if it is homogeneous with respect to a $\mathbb Z$-grading
by
$\deg(x)=a$ and $\deg(y)=b$.
The closure of a
one-dimensional $T$-orbit has either one or two $T$-fixed points; if
there are two $T$-fixed points in the closure we have $ab>0$.

\begin{notation}
  We set $S = K[x,y]$.  We grade $S$ by
  $\deg(x)=a$, $\deg(y)=b$, for positive integers $a,b$, and denote
  this as an $(a,b)$-grading. From now on we restrict to $a,b>0$ and
  $\gcd(a,b)=1$; this makes no material difference, and will simplify
  our notation.
  \end{notation}

Let $I\in\Hilb^N(\mathbb{A}^2)$ be an ideal that is homogeneous with
respect to the grading by $(a,b)\in\mathbb{Z}_{>0}^2$. The Hilbert
function $h:\mathbb{Z}_{\ge0}\to\mathbb{Z}_{\ge0}$ of $I$ is defined
by $h(d)=\dim_K(S/I)_d.$ Note that $\sum_{d\ge0}h(d)=N.$ The point
$I\in\Hilb^N(\mathbb{A}^2)$ is contained in the closed subscheme
$\Hilb_S^{h}$ parametrizing ideals in $S$ that are homogeneous with
respect to the $(a,b)$-grading and have Hilbert function $h$. This
subscheme is a \emph{multigraded Hilbert scheme} in the sense of
\cite{HaimanSturmfels}. Furthermore, for any grading $(a,b)\in
\mathbb{Z}_{>0}^2$ and for any Hilbert function
$h:\mathbb{Z}_{\ge0}\to\mathbb{Z}_{\ge0}$ with $\sum_{d\ge0}h(d)=N,$
if the scheme $\Hilb_S^{h}$ is nonempty, it is a smooth irreducible $T$-invariant
subvariety of $\Hilb^N(\mathbb{A}^2)$ \cite{Evain,MaclaganSmith}. The
union of the subvarieties $\Hilb_S^h$, as $(a,b)$ and $h$ vary, is
precisely the set of ideals corresponding to vertices and edges of the
$T$-graph, and any intersection of two different $\Hilb_S^h$ is either
empty, or consists of a single point corresponding to a monomial
ideal.

Each multigraded Hilbert scheme $\Hilb_S^h$ inherits a $T$-action from
$\Hilb^N(\mathbb{A}^2).$ We may define the $T$-graph $G_h(K)$ of
$\Hilb_S^h$ in exact analogy with that of $\Hilb^N(\mathbb{A}^2)$: the
vertices of $G_h(K)$ are zero-dimensional $T$-orbits, which are
monomial ideals whose Hilbert function with respect to $(a,b)$ is $h$,
and two vertices are connected by an edge if there is a
one-dimensional $T$-orbit in $\Hilb_S^h$ containing a $K$-rational
point whose closure contains those vertices. Note that $G_h(K)$ is
naturally a subgraph of $G_N(K)$. Moreover, we have the following
decomposition:
\begin{proposition}[\cite{HeringMaclagan}, Corollary 2.6]\label{cor:HMCor}
  The $T$-graph $G_N(K)$ is the union of the subgraphs $G_h(K)$ as
  $(a,b)$ and $h$ vary, and these subgraphs have disjoint edge sets.
\end{proposition}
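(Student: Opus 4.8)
The plan is to prove the statement in two parts: first that $G_N(K)=\bigcup_{(a,b),h} G_h(K)$ as graphs, and then that the edge sets of the various $G_h(K)$ are pairwise disjoint. Throughout I will freely use the facts recalled above: that a non-monomial ideal lying on a one-dimensional $T$-orbit is homogeneous with respect to some $(a,b)$-grading, which after normalization has $a,b>0$ and $\gcd(a,b)=1$; that each nonempty $\Hilb_S^h$ is a closed, $T$-invariant subscheme of $\Hilb^N(\mathbb{A}^2)$; and that two distinct multigraded Hilbert schemes $\Hilb_S^h$ (for distinct data $((a,b),h)$) meet in at most one point, necessarily a monomial ideal.

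For the first part, the inclusion $G_h(K)\subseteq G_N(K)$ is immediate, since a one-dimensional $T$-orbit inside $\Hilb_S^h$ is in particular a one-dimensional $T$-orbit in $\Hilb^N(\mathbb{A}^2)$ with the same fixed points in its closure, and colengths match. For the reverse, I would take an edge $e=\{M^-,M^+\}$ of $G_N(K)$ together with a one-dimensional $T$-orbit $O$ containing a $K$-rational point $I$ whose closure contains $M^-$ and $M^+$. Since $O$ is one-dimensional, $I$ is not $T$-fixed, hence not a monomial ideal, so $I$ is homogeneous with respect to some normalized grading $(a,b)$. Setting $h(d)=\dim_K(S/I)_d$ (so that $\sum_d h(d)=\dim_K(S/I)=N$) gives $I\in\Hilb_S^h$, and since $\Hilb_S^h$ is $T$-invariant and closed, $\overline O=\overline{T\cdot I}\subseteq\Hilb_S^h$; hence $M^-,M^+\in\Hilb_S^h$, and $O$ exhibits $e$ as an edge of $G_h(K)$. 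The same argument applied to an arbitrary grading shows each colength-$N$ monomial ideal is a vertex of some $G_h(K)$, so the vertex sets agree as well.

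For disjointness, suppose an edge $e=\{M^-,M^+\}$ lies both in $G_{h_1}(K)$, for the grading $(a_1,b_1)$, and in $G_{h_2}(K)$, for the grading $(a_2,b_2)$. Choosing witnessing one-dimensional orbits $O_1\subseteq\Hilb_S^{h_1}$ and $O_2\subseteq\Hilb_S^{h_2}$ and arguing as above, we get $\overline{O_i}\subseteq\Hilb_S^{h_i}$, so $M^-,M^+\in\Hilb_S^{h_1}\cap\Hilb_S^{h_2}$. As $M^-\neq M^+$, this intersection has at least two points; by the recalled fact, the only way this can happen is if $\Hilb_S^{h_1}$ and $\Hilb_S^{h_2}$ coincide, and since each of them contains a non-monomial point (any point of the curve $\overline{O_i}$ other than $M^-$ and $M^+$) whose grading and Hilbert function are determined by it, this forces $((a_1,b_1),h_1)=((a_2,b_2),h_2)$.

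The individual steps are short once these recalled facts are available, and I expect the only real subtlety to lie in the disjointness argument: the key point is that an edge, unlike a single one-dimensional orbit, pins down two distinct monomial ideals, and it is this—together with the fact that distinct $\Hilb_S^h$ meet in at most one point—that excludes an edge from being witnessed by orbits homogeneous for different gradings, a possibility the family phenomenon illustrated in Figure~\ref{f:firsteg} does not obviously rule out. The recalled intersection fact itself ultimately rests on the elementary observation that an ideal homogeneous with respect to two distinct coprime positive gradings must be a monomial ideal, since the associated bigrading then splits $S$ into one-dimensional graded pieces.
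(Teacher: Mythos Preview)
The paper does not give its own proof of this proposition; it simply records it with a citation to \cite{HeringMaclagan}*{Corollary 2.6}. Your argument is a correct self-contained derivation from the background facts stated in the paragraphs preceding the proposition, so there is nothing to compare against, and your proof would be a reasonable expansion of that citation.

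One small remark on your closing commentary: you write that the single-point intersection fact ``ultimately rests on the elementary observation that an ideal homogeneous with respect to two distinct coprime positive gradings must be a monomial ideal.'' That observation only shows that $\Hilb_S^{h_1}\cap\Hilb_S^{h_2}$ consists \emph{entirely} of monomial ideals when the gradings differ; it does not by itself yield \emph{at most one} point, which is the strength you actually use when you argue that $M^-\neq M^+$ forces the two multigraded Hilbert schemes to coincide. Since you are invoking the single-point fact as a black box from the paper anyway, this does not affect the validity of your proof, but the final sentence slightly overstates what the elementary bigrading argument gives you. (Your clause about the non-monomial point determining its grading, on the other hand, is correct and neatly closes the gap between ``same subscheme'' and ``same data $((a,b),h)$,'' though under the most natural reading of the recalled fact it is redundant.)
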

In light of Proposition \ref{cor:HMCor}, in order to determine $G_N(K)$
it is sufficient to study the graded Hilbert schemes $\Hilb_S^h$
separately. Thus from now on, fix a grading
$(a,b)\in\mathbb{Z}_{>0}^2$ with $\gcd(a,b)=1$, and a Hilbert function
$h:\mathbb{Z}_{\ge0}\to\mathbb{Z}_{\ge0}$ with $\sum_{d\ge0}h(d)<\infty.$

Note that the 1-parameter subtorus
$T_{a,b}:=\{(t^a,t^b)\thickspace|\thickspace t\in
K^*\}\subseteq T$ acts trivially on $\Hilb_S^h$, so we need only
consider the action of the one-dimensional torus $T/T_{a,b}$. Since
$\Hilb_S^h$ is smooth and projective, the Bia{\l}ynicki-Birula decomposition of
$\Hilb_S^h$ with respect to $T/T_{a,b}$ decomposes $\Hilb_S^h$ as a
union of affine spaces, each consisting of points whose limit under
the subtorus action is a given fixed point. We describe these affine
spaces algebraically as follows. Set $\prec$ to be the lexicographic
order with $x \prec y$. A $T$-fixed point corresponds to a monomial ideal
$M$. The Bia{\l}ynicki-Birula cell associated to $M$ is
$$C_{\prec}(M) = \{ I \in \Hilb_S^h\thickspace|\thickspace
\inn_{\prec} (I)= M \},$$ where $\inn_{\prec}(I)$ is the initial ideal
in the sense of Gr\"obner bases; see \cite{CLO}.  An explicit
parameterization of $C_{\prec}(M)$ was given by Evain~\cite{Evain}; we
recall this in Section~\ref{sec:ArrowsAndPaths}.  For two monomial
ideals $M, M' \in \Hilb_S^h$, the {\em edge-scheme} between $M$ and
$M'$ is the scheme-theoretic intersection
$$E(M,M') = C_{\prec}(M) \cap C_{\prec^{opp}}(M'),$$ where
$\prec^{opp}$ is the lexicographic order with $x \succ y$.  This was
first studied computationally by Altmann and
Sturmfels~\cite{AltmannSturmfels}.  There is an edge between $M$ and
$M'$ in the $T$-graph if and only if one of $E(M,M')$ and $E(M',M)$
has a $K$-rational point.

The vertices of $G_N(K)$ are purely combinatorial: colength-$N$
monomial ideals in $S = K[x,y]$ correspond to partitions of $N$, and
requiring
that the
Hilbert function with respect to $(a,b)$ is $h$ is a combinatorial
condition on partitions of $N =\sum_{d\ge0}h(d)$.
The edges, however, depend on the field $K$, as the following examples
show.

\begin{example}
  There is an edge between the two monomial ideals
  $M= \langle x^5,y^2 \rangle$ and $M'=\langle x^2,y^5 \rangle$ when
  viewed as ideals in $\mathbb R[x,y]$, but not when viewed as ideals
  in $\mathbb Q[x,y]$, so the $T$-graph of $\Hilb^{10}(\mathbb A^2)$
  differs over $\mathbb R$ and ${\mathbb Q}$. This is the case because
  every ideal in $E(M,M')$ has the form
  $I = \langle y^2+\alpha xy+\beta x^2, x^5 \rangle$, with
  $\alpha^4-3\alpha^2\beta+\beta^2=0$, by
  \cite{HeringMaclagan}*{Example 2.11}.  This is the union of two
  one-dimensional $T$-orbits, which have $\mathbb R$-rational points,
  but no $\mathbb Q$-rational points. Note that the edge scheme
  $E(M,M')$ is a subscheme of $\Hilb_S^h$, where the grading is
  $(1,1),$ and $h=(1,2,2,2,2,1,0,0,\ldots).$

  This example is generalized in
  \cite{SilversmithTropicalIdeal}*{Theorem 5.11}, which shows that if
  $m>k>0,$ and we define $M=\langle x^m,y^k \rangle$ and
  $M'=\langle x^k, y^m \rangle$ in $\Hilb^{mk}(\mathbb A^2)$, then the
  edge-scheme $E(M,M')$ is one dimensional and reducible over
  $\mathbb C$, with the number of irreducible components equal to the
  number of binary necklaces with $k$ black and $m-k$ white beads.
  The proof actually shows that these edges have $K$-rational points
  whenever there exists $c\in K$ such that $x^m+cy^m$ has a degree-$k$
  factor with coefficients in $K$.
\end{example}

By contrast, the definition of the spine of the $T$-graph, given in Definition~\ref{d:spine},  is purely combinatorial.

\begin{definition} \label{d:dominanceorder}
  Let $M,M'$ be monomial ideals in $\Hilb_S^h$. We define
  $M \preceq M'$ if for each degree $d$ there is a degree-preserving
  bijection $f$ from the monomials in $M$ to the monomials in $M'$
  with $m \succeq f(m)$ for all monomials $m \in M$, where
  $\prec$ is the lexicographic order with $x \prec y$. This defines a
  partial ordering on the monomial ideals in $\Hilb_S^h$.
\end{definition}
\begin{remark}
  The partial order of Definition~\ref{d:dominanceorder} may be
  regarded as a graded version of the dominance order for partitions.
  Recall that the monomial ideals $M,M'$ correspond to partitions, or
  alternatively, to Young diagrams.  Under this correspondence, $M\preceq M'$
  if and only if $M'$ can be obtained from $M$ by moving boxes of the
  Young diagram up and to the left, along lines of slope $-a/b.$ See
  Figure \ref{fig:GradedDominanceOrder}. Compare this to the usual
  dominance order for partitions, which is identical after removing
  the slope restriction.
\end{remark}

 A necessary, but not sufficient, condition for
  $E(M,M')$ to be nonempty is that $M\preceq M'$ with respect to this partial 
 order; this is a straightforward special case of \cite[Thm.
   1.3]{HeringMaclagan}. In particular, if $M\ne M',$ then at most one of $E(M,M')$ and $E(M',M)$ is nonempty. We may therefore regard $G_N(K)$ as a directed graph, with an edge from $M$ to $M'$ if $E(M,M')$ is nonempty; the necessary condition above implies that the resulting directed graph is acyclic.

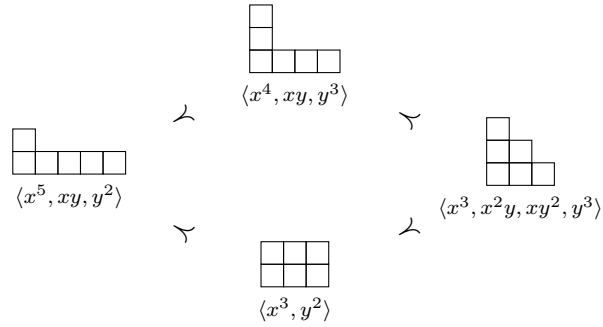
\begin{figure}
  \centering
  \begin{tikzpicture}[scale=1.5]
    \draw (0,0) node {\begin{tikzpicture}[scale=0.3]
        \foreach \x in {0,1,...,4} {
          \draw (\x,0)--(\x+1,0)--(\x+1,1)--(\x,1)--cycle;
        };
        \foreach \x in {0} {
          \draw (\x,1)--(\x+1,1)--(\x+1,2)--(\x,2)--cycle;
        };
        \draw (2.5,-1) node {\tiny $\langle{x^5,xy,y^2}\rangle$};
      \end{tikzpicture}};
    \draw (2,1) node {\begin{tikzpicture}[scale=0.3]
        \foreach \x in {0,1,...,3} {
          \draw (\x,0)--(\x+1,0)--(\x+1,1)--(\x,1)--cycle;
        };
        \foreach \x in {0} {
          \draw (\x,1)--(\x+1,1)--(\x+1,2)--(\x,2)--cycle;
        };
        \foreach \x in {0} {
          \draw (\x,2)--(\x+1,2)--(\x+1,3)--(\x,3)--cycle;
        };
        \draw (2,-1) node {\tiny $\langle{x^4,xy,y^3}\rangle$};
      \end{tikzpicture}};
    \draw (2,-1) node {\begin{tikzpicture}[scale=0.3]
        \foreach \x in {0,1,...,2} {
          \draw (\x,0)--(\x+1,0)--(\x+1,1)--(\x,1)--cycle;
        };
        \foreach \x in {0,1,2} {
          \draw (\x,1)--(\x+1,1)--(\x+1,2)--(\x,2)--cycle;
        };
        \draw (1.5,-1) node {\tiny $\langle{x^3,y^2} \rangle$};
      \end{tikzpicture}};
    \draw (4,0) node {\begin{tikzpicture}[scale=0.3]
        \foreach \x in {0,1,2} {
          \draw (\x,0)--(\x+1,0)--(\x+1,1)--(\x,1)--cycle;
        };
        \foreach \x in {0,1} {
          \draw (\x,1)--(\x+1,1)--(\x+1,2)--(\x,2)--cycle;
        };
        \foreach \x in {0} {
          \draw (\x,2)--(\x+1,2)--(\x+1,3)--(\x,3)--cycle;
        };
        \draw (1.5,-1) node {\tiny $\langle{x^3,x^2y,xy^2,y^3}\rangle$};
      \end{tikzpicture}};
    \draw (1,.5) node[rotate=30] {$\prec$};
    \draw (1,-.5) node[rotate=-30] {$\prec$};
    \draw (3,.5) node[rotate=-30] {$\prec$};
    \draw (3,-.5) node[rotate=30] {$\prec$};
  \end{tikzpicture}  
  \caption{The poset of monomial ideals in $\Hilb^h_S$, where
    $h=(1,1,2,1,1,0,0,\ldots)$ with respect to the $(1,2)$-grading.
    The leftmost ideal is the lex-least, and the rightmost
    ideal is the lex-most. Note that $M_1\prec M_2$ if and only if
    $M_2$ can be obtained from $M_1$ by moving boxes of the Young
    diagram up-and-left along lines of slope $-1/2$.}
  \label{fig:GradedDominanceOrder}
\end{figure}

It was first noted by Evain \cite{Evain}*{Theorem 19} that this poset
has a unique maximal element, which we denote by $M^+$, and a
unique minimal element, which we denote by $M^-$; see also
\cite{MaclaganSmith}*{Proposition 3.12}. We call $M^+$ the lex-most
ideal with Hilbert function $h$, and $M^-$ the lex-least such ideal.

As defined in Definition~\ref{d:spine}, the spine $G^*_N$ of the
$T$-graph of $\Hilb^N(\mathbb A^2)$ is the graph with vertices
monomial ideals $M$ in $S$ with $\dim_K(S/M) =N$, and an edge joining
two ideals $M,M'$ if $M=M^-$, and $M'=M^+$ for some grading $(a,b)$
and Hilbert function. Figure \ref{fig:spine6} shows $G_6(\mathbb C)$
and $G_6^*$.

\begin{figure}
  \centering
  \includegraphics[height=3in]{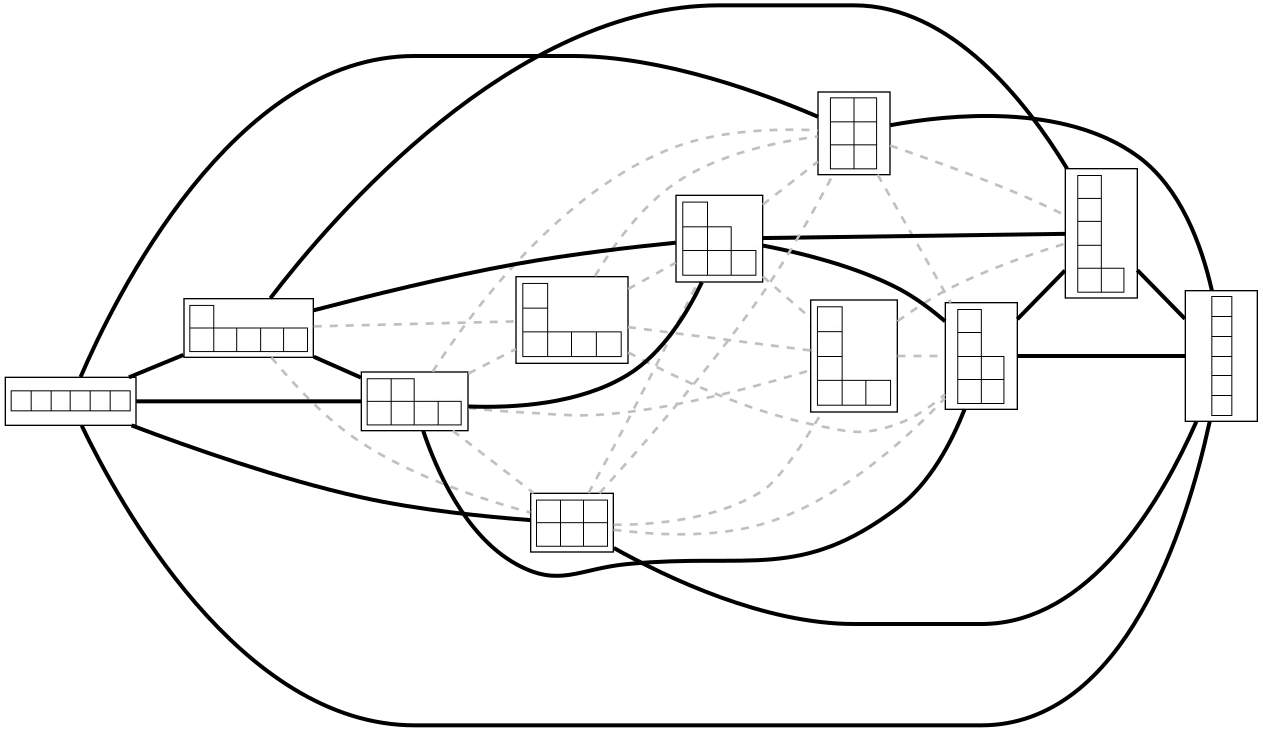}
  \caption{The $T$-graph $G_6(\mathbb C)$, where the edges of $G_6^*$ are
    solid.}
  \label{fig:spine6}
\end{figure}

\begin{theorem} \label{t:algebraicallyclosedcase} Let $K$ be an
  infinite field.  If there is an edge connecting two monomial ideals
  $M,M'$ in $G^*_N$, then there is an edge in the $T$-graph $G_N(K)$
  connecting $M,M'$.
\end{theorem}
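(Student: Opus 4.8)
The plan is to construct an explicit ideal $I \in \Hilb_S^h$ whose $T$-orbit closure contains both $M^-$ and $M^+$, by building $I$ degree-by-degree and realizing the two extremal monomial ideals as the two distinct Gröbner initial ideals with respect to $\prec$ and $\prec^{\mathrm{opp}}$. Concretely, in each degree $d$ the space $I_d \subseteq S_d$ is a subspace of dimension $\dim S_d - h(d)$; I want to choose $I_d$ so that its initial subspace with respect to the lex order ($x \prec y$) is spanned by the monomials in $(M^+)_d$, and its initial subspace with respect to the opposite lex order ($x \succ y$) is spanned by the monomials in $(M^-)_d$. Equivalently, writing the degree-$d$ monomials in the relevant interval in lex order, I want the reduced row echelon form of a matrix representation of $I_d$ to have pivots in the $M^+$ positions, while the "anti-echelon" form (reducing from the other end) has pivots in the $M^-$ positions. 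Since $M^- \preceq M^+$, the combinatorics of Definition~\ref{d:dominanceorder} guarantees that such pivot sets are "compatible" in the sense needed for a generic matrix to simultaneously achieve both; this is exactly the matroid-genericity that Theorem~\ref{t:tropicalideal} later exploits in the standard grading.

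First I would recall Evain's explicit parameterization of the Białynicki--Birula cell $C_{\prec}(M^+)$ (cited as forthcoming in Section~\ref{sec:ArrowsAndPaths}), which exhibits $C_{\prec}(M^+) \cong \mathbb{A}^r_K$ via a finite set of "arrow" parameters, so that $\inn_\prec(I) = M^+$ for every $I$ in this cell. Then the condition that $I$ also lie in $C_{\prec^{\mathrm{opp}}}(M^-)$, i.e. that $\inn_{\prec^{\mathrm{opp}}}(I) = M^-$, cuts out the edge-scheme $E(M^+, M^-)$... wait, with the orientation conventions it is $E(M^-,M^+) = C_\prec(M^+) \cap C_{\prec^{\mathrm{opp}}}(M^-)$ after matching notation; in any case the point is that $E(M^-,M^+)$ is a locally closed subscheme of an affine space, and I must show it has a $K$-rational point for $K$ infinite. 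The strategy is: (1) show $E(M^-, M^+)$ is nonempty over $\overline{K}$ — this follows because $M^-$ and $M^+$ are respectively the unique minimum and maximum of the poset, so the cell $C_{\prec}(M^+)$ (the "big cell", of maximal dimension equal to $\dim \Hilb_S^h$) must meet the "big cell" $C_{\prec^{\mathrm{opp}}}(M^-)$ of the opposite BB decomposition, both being dense open; two dense opens in an irreducible variety intersect; (2) upgrade to a $K$-point by showing $E(M^-,M^+)$ contains a $K$-rational open subset, which holds because it is an open subscheme of an irreducible variety defined over $K$ (indeed of affine space over $K$), and a nonempty open subscheme of $\mathbb{A}^n_K$ has a $K$-point whenever $K$ is infinite.

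The cleanest route avoids Evain's parameterization: since $\Hilb_S^h$ is irreducible and defined over $K$ (the prime field, or any field), and since $C_\prec(M^+) = \{I : \inn_\prec I = M^+\}$ is the generic/open BB cell precisely because $M^+$ is the lex-most fixed point (the limit $\lim_{t\to 0}$ under the appropriate one-parameter subtorus of a generic point is the "most degenerate" fixed point, which is $M^+$ for one orientation and $M^-$ for the other), we conclude $C_\prec(M^+)$ is a nonempty Zariski-open (hence dense) subset of $\Hilb_S^h$, and likewise $C_{\prec^{\mathrm{opp}}}(M^-)$ is dense open. Their intersection $E(M^-, M^+)$ is then a nonempty dense open subscheme of the irreducible $K$-variety $\Hilb_S^h$. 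Any nonempty open subscheme of a geometrically irreducible variety over an infinite field $K$ has a $K$-rational point — this is standard (e.g. via Noether normalization, a dense open of $\mathbb{A}^n_K$ meets $K^n$ when $|K| = \infty$). Pick such a $K$-rational $I$; then $\inn_\prec(I) = M^+$ forces $M^+$ into the orbit closure, $\inn_{\prec^{\mathrm{opp}}}(I) = M^-$ forces $M^-$ in, and $I$ is homogeneous with Hilbert function $h$ by construction, proving the theorem.

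The main obstacle is establishing that $C_\prec(M^+)$ is genuinely the \emph{open} BB cell — equivalently that $M^+$, the lex-most ideal, is the BB "sink" (or source) for the $T/T_{a,b}$ action with the weight orientation matching $\prec$. This requires relating the combinatorial lex-extremality defining $M^\pm$ (Definition~\ref{d:dominanceorder} and the Evain/Maclagan--Smith result that $M^\pm$ are the poset extrema) to the geometric statement that the generic point of $\Hilb_S^h$ flows to $M^+$ under $\lim_{t \to 0}$. I expect this to follow from the fact that the BB cell containing a generic point is the one of dimension $\dim \Hilb_S^h$, combined with a dimension count identifying that top-dimensional cell with $C_\prec(M^+)$ via the explicit tangent-space/arrow description of Evain — or alternatively from the observation that $\inn_\prec$ is an upper-semicontinuous invariant whose generic value on an irreducible $\Hilb_S^h$ must be the $\preceq$-maximum $M^+$ because any initial ideal is $\preceq M^+$ and the locus where $\inn_\prec(I) = M$ for fixed $M$ has codimension controlled by how far $M$ is from $M^+$. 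Pinning down this semicontinuity/dimension argument precisely, and confirming both orientations simultaneously give the two \emph{opposite} extremal fixed points, is where the real work lies; everything after that is the elementary "dense open over an infinite field has a rational point" argument.
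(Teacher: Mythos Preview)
Your approach is essentially the paper's: use smoothness and irreducibility of $\Hilb_S^h$ (Evain), identify the two open Bia\l{}ynicki--Birula cells corresponding to the extremal fixed points, and observe that their intersection is a nonempty open in affine space over $K$, hence has a $K$-point for $K$ infinite. Two points to fix. First, your orientation is reversed: the open cell for $\prec$ (lex with $x\prec y$) is $C_\prec(M^-)$, not $C_\prec(M^+)$, because $M^-$ is the \emph{sink} for that flow (equivalently $T^-(M^-)=\emptyset$, so $\dim C_\prec(M^-)=|T^+(M^-)|=\dim\Hilb_S^h$); symmetrically $C_{\prec^{\mathrm{opp}}}(M^+)$ is open. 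The paper sidesteps your ``main obstacle'' entirely by simply saying ``$M^+$ is a source and $M^-$ is a sink'' and citing the BB theorem. Second, your sentence ``any nonempty open subscheme of a geometrically irreducible variety over an infinite field $K$ has a $K$-rational point'' is false as stated (a pointless conic over $\mathbb{Q}$ is a counterexample); what makes the argument go through, as your parenthetical correctly notes, is that the BB cell is \emph{affine space} over $K$, so the intersection is a nonempty open in $\mathbb{A}^n_K$.
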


\begin{proof}
    Suppose $M$ and $M'$ are connected by an edge in $G^*_N.$ By
    definition, there exists a grading $(a,b)$ with respect to which
    the Hilbert functions of $M$ and $M'$ agree, and after possibly
    renaming $M$ and $M'$, we have $M=M^+$ and $M'=M^-.$

    By \cite[Thm. 1]{Evain}, \cite[Thm. 1.1]{MaclaganSmith},
    $\Hilb_S^h$ is smooth, projective, and irreducible.  Since $M^+$
    is a source, and $M^-$ is a sink, of the $T/T_{a,b}$ action, the
    Bia\l{}ynicki-Birula cells $C_{\prec^{opp}}(M^+)$ and
    $C_{\prec}(M^-)$ are Zariski open and isomorphic to affine spaces
    (\cite{BialynickiBirula},\cite[Thm. 11]{Evain}). Note that this follows from \cite{BialynickiBirula} only when the field $K$ is algebraically closed, but this assumption is unnecessary --- for a discussion see \cite[\S 3]{Brosnan}. Also, while \cite{Evain} assumes $K$ is algebraically closed, this is never used in the proofs. Thus $C_{\prec^{opp}}(M^+)\cap
    C_{\prec}(M^-)$ is isomorphic to an open subset of an affine space
    over $K$; since $K$ is infinite, it follows that
    $C_{\prec^{opp}}(M^+)\cap C_{\prec}(M^-)$ contains a $K$-rational
    point.
\end{proof}

\section{The tropical ideal of an edge of the
  spine} \label{s:tropicalideal}

In this section we prove Theorem
\ref{t:tropicalideal}.

\subsection{Tropicalizations of ideals}\label{sec:Tropicalization}

We first recall the concept of tropicalization of ideals, and the
tropical stratification of the Hilbert scheme.

Let $\mathbb B = (\{0,\infty\},\tplus, \ttimes) $ be the Boolean
semiring, with the operations of tropical addition (minimum) and
tropical multiplication (addition).  The tropicalization of $f = \sum
c_{ij} x^iy^j \in K[x,y]$ is
$\trop(f) = \tplus_{c_{ij} \neq 0} x^iy^j \in \mathbb
B[x,y]$.  The tropicalization of an ideal $I \subseteq
K[x,y]$ is $$\trop(I) = \langle \trop(f) : f \in I
\rangle \subseteq \mathbb B[x,y].$$

This is the trivial valuation case of tropicalizing ideals in the
sense of tropical scheme theory
\cites{Giansiracusa2, TropicalIdeals, MaclaganRinconValuations, Balancing}.

Note that a polynomial in the semiring $\mathbb B[x,y]$ can be
identified with its support.  When $I \subseteq S$ is graded, the
polynomials in $\trop(I)$ of degree $d$ of minimal support are the
{\em circuits} of a matroid $\mathcal M(I_d)$ on the ground set
$\QQ{d}$ of degree-$d$ monomials.  We call this the degree-$d$ matroid
of $I$.  See, for example, \cite{Oxley} for more on matroids.

We will primarily focus on the {\em basis} characterization of
matroids.  When $I \subseteq S$ is homogeneous with Hilbert function
$h$, a collection $E$ of $h(d)$ monomials of degree $d$ is a basis for
$\mathcal M(I_d)$ if there is no polynomial in $I$ with support in
$E$.  The matroid $\mathcal M(I_d)$ is {\em uniform} if every
collection of $h(d)$ monomials of degree $d$ is a basis.  In this case
we write $\mathcal M(I_d) = U_{h(d),\qq{d}}$, where $\qq{d} =
|\QQ{d}|$.

The assignment $I\mapsto \trop(I)$ defines a stratification of
$\Hilb_S^h$, called the \emph{matroid stratification} or
\emph{tropical stratification}. A stratum of this stratification
consists of all ideals with a fixed tropicalization. If $\sum_dh(d)<\infty$ (as will always
be true in this paper), then there are finitely many strata, and they
are Zariski-locally closed. In general, there may be countably many
strata; see \cite{SilversmithTropicalIdeal}.

\subsection{Evain's parameterization of the Bia{\l}ynicki-Birula cells}\label{sec:ArrowsAndPaths}

In this section we recall Evain's parameterization of the
Bia{\l}ynicki-Birula cells.
This relies on the
combinatorial decomposition of the tangent space to the Hilbert scheme
at a monomial ideal given by {\em significant arrows}.

\begin{notation}
  Let $\prec$ denote the lexicographic order on monomials in $S = K[x,y]$ with
  $x\prec y$.  We set $r$ to be the Laurent monomial $ x^b/y^a$.
  When $(a,b)=(1,1)$, we have $r = x/y$.
\end{notation}

\begin{definition}\label{d:MiWiArrows}
Let $M \subseteq S$ be a finite-colength monomial ideal.  Write the
minimal generators for $M$ as $m_0 \prec m_1 \prec \dots \prec m_e$,
so $m_0$ is a power of $x$, and $m_e$ is a power of $y$.  For $1 \leq
i \leq e$ set $w_i = \lcm(m_i,m_{i-1})$.

The set $T^+(M)$ is 
$$T^+(M) = \{ (i,\ell) :1 \leq i \leq e, \ell \in \mathbb Z_{\geq 1},
m_ir^{\ell} \in S \setminus M, w_ir^{\ell} \in M \}.$$ Elements of $T^+(M)$
are often drawn as arrows from $m_i$ to $m_ir^{\ell}$, and are called
{\em positive significant arrows}.  This is illustrated in Figure~\ref{f:sigarrowdefn}.

The set $T^-(M)$ of {\em negative significant arrows} has arrows pointing in the other direction:
$$T^-(M) = \{ (i,\ell) :0 \leq i \leq e-1, \ell \in \mathbb Z_{\leq -1},
m_ir^{\ell} \in S \setminus M, w_{i+1}r^{\ell} \in M \}.$$
\end{definition}

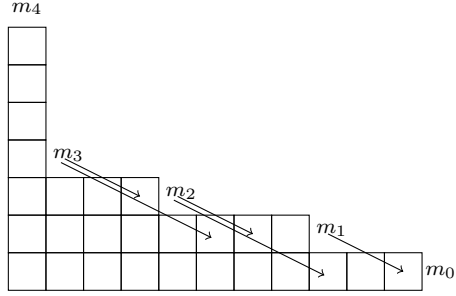
\begin{figure}
  \centering
  \begin{tikzpicture}[scale=0.5]
    \foreach \x in {0,1,...,10} {
      \draw (\x,0)--(\x+1,0)--(\x+1,1)--(\x,1)--cycle;
    };
    \foreach \x in {0,...,7} {
      \draw (\x,1)--(\x+1,1)--(\x+1,2)--(\x,2)--cycle;
    };
    \foreach \x in {0,...,3} {
      \draw (\x,2)--(\x+1,2)--(\x+1,3)--(\x,3)--cycle;
    };
    \foreach \y in {3,...,6} {
      \draw (0,\y)--(1,\y)--(1,\y+1)--(0,\y+1)--cycle;
    };
    \draw (11.5,.5) node {\tiny $m_0$};
    \draw (8.6,1.6) node {\tiny $m_1$};
    \draw (4.6,2.6) node {\tiny $m_2$};
    \draw (1.6,3.6) node {\tiny $m_3$};
    \draw (0.5,7.5) node {\tiny $m_4$};
    \draw[->] (8.5,1.5)--(10.5,0.5);
    \draw[->] (4.5,2.5)--(6.5,1.5);
    \draw[->] (4.4,2.4)--(8.4,0.4);
    \draw[->] (1.5,3.5)--(3.5,2.5);
    \draw[->] (1.4,3.4)--(5.4,1.4);
  \end{tikzpicture}
  \caption{The ideal $M=\langle{x^{11},x^8y,x^4y^2,xy^3,y^7}\rangle$ has
    $T^+(M)=\{(1,1),(2,1),(2,2),(3,1),(3,2)\}$ with respect to the grading $(1,2).$}
  \label{f:sigarrowdefn}
\end{figure}

\begin{remark}
  The use of arrows as a combinatorial basis for the tangent space
  of the Hilbert scheme of points at a monomial ideal was introduced by Haiman in 
  \cite{HaimanQTCatalan}.  In Haiman's formulation there is an equivalence class of arrows; we follow the convention introduced in  \cite{Evain} to choose a particular representative of this class that starts at a minimal generator of the ideal, and use the notation from   \cite{MaclaganSmith}.
\end{remark}

The lex-most and lex-least ideals $M^+$ and $M^-$ defined in Section~\ref{s:spine} can be characterized as the unique ideals with $T^+(M^+)=T^-(M^-)= \emptyset$.  
This was first shown in \cite{Evain}, and generalized in
\cite{MaclaganSmith}.

We next recall the construction of the universal ideal over
$C_{\prec}(M)$.

\begin{definition}\label{d:fk}
  For a monomial $m\in M,$ we define
  \begin{align*}
    j^+(m)&=\max\{i\thickspace|\thickspace\text{$m_i$ divides $m$}\}, \text{ and }\\
    j^-(m)&=\min\{i\thickspace|\thickspace\text{$m_i$ divides $m$}\}.
  \end{align*}
  Note $j^+$ is denoted by $j$ in \cite{HeringMaclagan}.
  We form the polynomial ring
$K[\{c_i^\ell\thickspace|\thickspace(i,\ell)\in
  T^+(M)\}]$ with variables indexed by $T^+(M)$, and 
  recursively define polynomials
  $$f_0,\ldots,f_e\in K[\{c_i^\ell\thickspace|\thickspace(i,\ell)\in
  T^+(M)\}][x,y]$$ by $f_0=m_0$ and
  $$f_i=\frac{m_i}{m_{i-1}}f_{i-1}+\sum_{(i,\ell)\in
    T^+(M)}c_i^\ell\frac{m_ir^\ell}{m_{j^+(w_ir^\ell)}}f_{j^+(w_ir^\ell)}.$$
  Note that the initial (leading)
  term of $f_i$ with respect to
  $\prec$ is $m_i$.
\end{definition}
\begin{theorem}[\cite{Evain}, Theorem 11] \label{t:evainuniversal}
   The set $\{f_0,\ldots,f_e\}$ is a Gr\"obner basis for the universal
   ideal $I$ over $C_{\prec}(M)$. The induced map
   $\A^{\abs{T^+(M)}}\to H_S^h$ is injective with image $C_\prec(M).$
\end{theorem}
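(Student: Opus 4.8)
The plan is to realize the map in the statement as coming from the ideal $\tilde I$ generated over $K[\{c_i^\ell\}]$ by $f_0,\dots,f_e$, to show that $\tilde I$ is flat with Hilbert function $h$ and initial ideal $M$ in every fiber, and finally to identify the resulting morphism with the inclusion of $C_\prec(M)$ via a tangent-space computation at the fixed point together with equivariance. First I would check, by induction on $i$, that $f_i$ is a well-defined homogeneous element of $K[\{c_i^\ell\}][x,y]$: more precisely, that $f_i$ is a $K[\{c_i^\ell\}]$-linear combination of the monomials $m_ir^s$ with $s\ge0$ and $m_ir^s\in S$, the coefficient of the $s=0$ term $m_i$ being $1$. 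The one subtlety is that a factor $m_ir^\ell/m_{j^+(w_ir^\ell)}$ need not itself be a monomial, but $\frac{m_ir^\ell}{m_{j^+(w_ir^\ell)}}\cdot(m_{j^+(w_ir^\ell)}r^s)=m_ir^{\ell+s}$ lies in $S$ whenever $m_{j^+(w_ir^\ell)}r^s\in S$, since $m_{j^+(w_ir^\ell)}\mid w_ir^\ell$ and $w_i$, $m_i$ have the same $y$-exponent; so the recursion stays in the polynomial ring and preserves the stated form. Homogeneity is automatic because $r$ has degree $0$ in the $(a,b)$-grading, and $\init_\prec(f_i)=m_i$ with leading coefficient $1$ because multiplication by $r$ strictly lowers $\prec$. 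Because $S$ has two variables and $m_0\prec\dots\prec m_e$ have strictly increasing $y$-exponents, $m_k\mid\lcm(m_i,m_j)$ for all $i<k<j$, so by Buchberger's chain criterion it suffices to reduce the $S$-polynomials $S(f_{i-1},f_i)$; substituting the recursion for $f_i$ gives
\[
S(f_{i-1},f_i)=\frac{w_i}{m_{i-1}}f_{i-1}-\frac{w_i}{m_i}f_i=-\sum_{(i,\ell)\in T^+(M)}c_i^\ell\,\frac{w_ir^\ell}{m_{j^+(w_ir^\ell)}}\,f_{j^+(w_ir^\ell)},
\]
in which each summand has leading monomial $w_ir^\ell\prec w_i=\lcm(m_{i-1},m_i)$, so Buchberger's criterion holds. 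As all leading coefficients are units this is valid over $K[\{c_i^\ell\}]$, whence $K[\{c_i^\ell\}][x,y]/\tilde I$ is free over $K[\{c_i^\ell\}]$ with basis the monomials outside $M$, which in degree $d$ is free of rank $h(d)$.

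By the previous paragraph $\tilde I$ is a homogeneous ideal, flat over $\A^{\abs{T^+(M)}}$, with Hilbert function $h$ in every fiber, so by the universal property of the multigraded Hilbert scheme it defines a morphism $\phi\colon\A^{\abs{T^+(M)}}\to\Hilb_S^h$. Specializing the Gr\"obner basis to any field-valued point (the leading coefficients being $1$, nothing vanishes) shows that every fiber ideal has initial ideal containing $\langle m_0,\dots,m_e\rangle=M$, and since both have colength $N$ they are equal; hence $\phi$ factors through $C_\prec(M)$.

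It remains to prove $\phi$ is an isomorphism onto $C_\prec(M)$. Since $\Hilb_S^h$ is smooth and $C_\prec(M)$ is a plus-cell of its Bia\l{}ynicki-Birula decomposition for the $T/T_{a,b}$-action \cite{BialynickiBirula}, the cell $C_\prec(M)$ is an affine space whose dimension is the number of positive $T/T_{a,b}$-weights on $T_M\Hilb_S^h$; by Haiman's description of this tangent space in terms of arrows \cite{HaimanQTCatalan}, in the form used in \cite{Evain,MaclaganSmith}, that dimension equals $\abs{T^+(M)}$. Setting $c=0$ collapses the recursion to $f_i=(m_i/m_{i-1})f_{i-1}$, so $f_i|_{c=0}=m_i$ and $\phi(0)=M$; differentiating, $\partial f_i/\partial c_{i_0}^{\ell_0}|_{c=0}=m_ir^{\ell_0}$ for $i\ge i_0$ and $0$ for $i<i_0$, and this is exactly the first-order deformation of $M$ attached to the significant arrow $(i_0,\ell_0)$, so $d\phi_0$ is an isomorphism onto $T_MC_\prec(M)$. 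Finally $\phi$ is $T/T_{a,b}$-equivariant once $c_i^\ell$ is given the weight making $f_i$ homogeneous, which is positive because $r$ has negative weight for this $\mathbb{G}_m$; thus $\phi$ is an equivariant morphism of affine spaces under attracting actions, fixing the origin, and \'etale there. Its \'etale locus is open and, by equivariance, $T$-invariant, hence all of $\A^{\abs{T^+(M)}}$, so $\phi$ is surjective \'etale; and $\phi^{-1}(0)=\{0\}$ is a single reduced point (equivariance again), so $\phi$ is a local isomorphism at $0$, hence of degree $1$, hence an isomorphism. In particular $\phi$ is injective with image $C_\prec(M)$.

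The substance is concentrated in the last step: establishing $\dim C_\prec(M)=\abs{T^+(M)}$ and, above all, matching the significant-arrow terms of the recursion defining the $f_i$ with Haiman's combinatorial basis of $T_M\Hilb$, so that $d\phi_0$ is manifestly an isomorphism; the well-definedness of the $f_i$, the Buchberger computation, flatness, and factoring through the cell are all routine. A variant of the last step that avoids the tangent space, with the same hard kernel, is to invert the recursion directly: one shows the coefficient of $m_ir^\ell$ in $f_i$, for $(i,\ell)\in T^+(M)$, equals $c_i^\ell$ plus a polynomial in the $c_j^{\ell'}$ with $(j,\ell')$ strictly before $(i,\ell)$ in the order by first coordinate then second, so that $\phi$ differs from a comparison of reduced Gr\"obner bases by a triangular coordinate change; injectivity (recovering the point of $\A^{\abs{T^+(M)}}$ from a fiber ideal) and surjectivity onto $C_\prec(M)$ (a dimension count) then follow.
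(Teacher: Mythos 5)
The paper itself offers no proof of this statement---it is imported verbatim from \cite{Evain}*{Theorem 11}---so your argument is being measured against Evain's original one rather than anything in the text. Your proof is essentially correct and runs through the same circle of ideas: well-definedness and homogeneity of the $f_i$, the consecutive-S-pair Buchberger computation (the identity $S(f_{i-1},f_i)=-\sum c_i^\ell\frac{w_ir^\ell}{m_{j^+(w_ir^\ell)}}f_{j^+(w_ir^\ell)}$ with each summand's leading monomial $w_ir^\ell\prec w_i$ is right, and the chain criterion applies because of the staircase structure), freeness of the quotient giving a map to $\Hilb_S^h$ landing in $C_\prec(M)$, and the identification with the cell via the arrow basis of the tangent space. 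The computation $\partial f_i/\partial c_{i_0}^{\ell_0}|_{c=0}=m_ir^{\ell_0}$ for $i\ge i_0$ is correct, and you are right to flag that $\dim C_\prec(M)=\abs{T^+(M)}$ is the imported combinatorial input (Haiman/Evain); Evain's own route is in fact closer to the ``variant'' you sketch at the end, inverting the recursion by a triangular coordinate change on reduced Gr\"obner basis coefficients.

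The one step you should tighten is the final implication ``surjective \'etale with $\phi^{-1}(0)$ a single reduced point, hence of degree $1$, hence an isomorphism.'' \'Etale morphisms do not have constant fiber cardinality in general (consider $\A^1\setminus\{0\}\hookrightarrow\A^1$), so the fiber over the origin does not by itself compute a degree; some properness is needed. The equivariance you have already established supplies it: both coordinate rings are positively graded with degree-zero part $K$, and the fiber over the origin is finite, so the graded Nakayama lemma makes $K[\{c_i^\ell\}]$ a finite module over the coordinate ring of $C_\prec(M)$, i.e.\ $\phi$ is finite. A finite \'etale map onto the connected variety $C_\prec(M)\cong\A^{\abs{T^+(M)}}$ has constant degree, equal to $\dim_K K[\{c_i^\ell\}]/\mathfrak{m}_0K[\{c_i^\ell\}]=1$, so $\phi$ is an isomorphism. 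With that one line added (or with your triangular-inversion variant carried out instead), the argument is complete.
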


We will work directly with the coefficients of $f_i$. To do
so, we will use a combinatorial non-recursive description of these
coefficients given in \cite{HeringMaclagan}, which we now describe.
\begin{definition}[\cite{HeringMaclagan}*{Definition 4.10}]\label{d:Path}
  A \emph{path} from a generator
  $m_i\in M$ is a sequence of positive significant arrows
  $P=((i_1,\ell_1),(i_2,\ell_2),\ldots,(i_d,\ell_d))$, such that:
  \begin{enumerate}[label=(\alph*)]
  \item $i_1\le i,$ and
  \item if $d\ge2,$ then $((i_2,\ell_2),\ldots,(i_d,\ell_d))$ is a
    path from $m_{j^+(w_{i_1}r^{\ell_1})}$.
  \end{enumerate}
  The \emph{length} of $P$ is $l(P)=\ell_1+\cdots+\ell_d.$ We also
  associate to $P$ the monomial $c_P=c_{i_1}^{\ell_1}\cdots
  c_{i_d}^{\ell_d}$ in $K[C_{\prec}(M)] = K[c_i^{\ell} : (i,\ell) \in
    T^+(M)]$.  If the sequence is empty, $P$ is the empty path, which
  has length $0$, and $c_P = 1$.
\end{definition}
\begin{example} \label{e:paths}
  In Figure \ref{f:sigarrowdefn}, the paths from $m_3$ are as follows:
  \begin{center}
    \begin{tabular}{cl}
      Length&Paths\\\hline
      1&\textbf{((3,1))},\quad((2,1)),\quad((1,1))\\
      2&\textbf{((3,2))},\quad((3,1),(2,1)),\quad((3,1),(1,1)),\quad((2,2)),\quad((2,1),(1,1))\\
      3&\textbf{((3,2),(1,1))},\quad((3,1),(2,2)),\quad((3,1),(2,1),(1,1))
    \end{tabular}
  \end{center}
  The boldfaced paths are the direct paths, defined in Definition \ref{Def:DirectPath}.
\end{example}
\begin{theorem}[\cite{HeringMaclagan}, Lemma 4.12]\label{t:CoefficientsArePaths}
  We have the following alternate characterization of
  $f_i$:
  $$f_i=\sum_{\substack{\text{$P$ a path }\\\text{from $m_i$}}}c_Pm_ir^{l(P)}.$$
\end{theorem}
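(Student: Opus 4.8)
The plan is to prove the identity by induction on $i$, unwinding Evain's recursion from Definition~\ref{d:fk} and matching terms against the combinatorial description of paths in Definition~\ref{d:Path}. For the base case $i=0$, the only path from $m_0$ is the empty path: a nonempty path would begin with an arrow $(i_1,\ell_1)\in T^+(M)$ satisfying $i_1\le 0$, but every element of $T^+(M)$ has $i_1\ge 1$. The empty path contributes $c_\emptyset\,m_0 r^0 = m_0$, which is $f_0$ by definition.

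For the inductive step, assume the formula holds for $f_0,\dots,f_{i-1}$, and substitute it into the recursion
$$f_i=\frac{m_i}{m_{i-1}}f_{i-1}+\sum_{(i,\ell)\in T^+(M)}c_i^\ell\frac{m_ir^\ell}{m_{j^+(w_ir^\ell)}}f_{j^+(w_ir^\ell)}.$$
Using $\frac{m_i}{m_{i-1}}\cdot m_{i-1}r^{l(P)}=m_i r^{l(P)}$, the first term becomes $\sum_{P}c_P\,m_i r^{l(P)}$, the sum running over all paths $P$ from $m_{i-1}$. For each $(i,\ell)\in T^+(M)$, writing $k=j^+(w_ir^\ell)$, the corresponding summand becomes $\sum_{Q}c_i^\ell c_Q\,m_i r^{\ell+l(Q)}$, the sum running over all paths $Q$ from $m_k$, using the trivial cancellation $\frac{m_ir^\ell}{m_k}\cdot m_k r^{l(Q)}=m_i r^{\ell+l(Q)}$.

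It remains to check that these contributions reassemble into exactly the claimed sum over all paths from $m_i$. This follows from the observation that a path $P=((i_1,\ell_1),\dots,(i_d,\ell_d))$ from $m_i$ lies in precisely one of two disjoint classes: either $i_1\le i-1$, in which case $P$ is exactly a path from $m_{i-1}$; or $i_1=i$, in which case $(i,\ell_1)\in T^+(M)$ and $((i_2,\ell_2),\dots,(i_d,\ell_d))$ is a path from $m_{j^+(w_ir^{\ell_1})}$, so that $P$ is the arrow $(i,\ell_1)$ prepended to a path $Q$ from $m_{j^+(w_ir^{\ell_1})}$, with $c_P=c_i^{\ell_1}c_Q$ and $l(P)=\ell_1+l(Q)$. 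Both directions of this dichotomy are immediate from conditions (a)--(b) of Definition~\ref{d:Path} together with the fact that every arrow occurring in a path lies in $T^+(M)$; the empty path falls into the first class (as a path from $m_{i-1}$), so nothing is counted twice. Matching the two classes against the two groups of terms above completes the induction. I do not expect a genuine obstacle here — the argument is routine bookkeeping — and the only points needing care are confirming that the partition of paths from $m_i$ is genuine (in particular that the empty path is counted exactly once) and keeping the monomial factors $m_ir^\ell/m_{j^+(w_ir^\ell)}$ straight so the exponents of $r$ add correctly.
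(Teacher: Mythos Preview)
Your proof is correct. Note that the paper does not actually supply a proof of this statement; it is quoted from \cite{HeringMaclagan}*{Lemma 4.12}, and the argument there is precisely the induction on $i$ you have written, matching the two summands in Evain's recursion to the dichotomy $i_1\le i-1$ versus $i_1=i$ on the first step of a path. The only point you leave implicit is that $j^+(w_ir^\ell)<i$ for every $(i,\ell)\in T^+(M)$, so that the inductive hypothesis applies to $f_{j^+(w_ir^\ell)}$; this is already needed for Definition~\ref{d:fk} to be a valid recursion, but it would do no harm to state it.
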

Note that the term in this sum corresponding to the empty path is $m_i$.  
\begin{example}
  \label{e:paths2}
  Continuing Example~\ref{e:paths}, we have $f_0=m_0 =x^{11}$, $f_1 =
  x^8y+c_1^1x^{10}$, $f_2 = x^4y^2 +
  (c_1^1+c_2^1)x^6y+(c_2^1c_1^1+c_2^2) x^8$, and $f_3 =
  xy^3+(c_1^1+c_2^1+\mathbf{c_3^1})x^3y^2+(c_2^1c_1^1+c_2^2+c_3^1c_1^1+c_3^1c_2^1+\mathbf{c_3^2})x^5y+(c_3^1c_2^1c_1^1+c_3^1c_2^2+\mathbf{c_3^2c_1^1})x^7$. The
  boldfaced monomials $c_P$ correspond to direct paths, defined in  Definition \ref{Def:DirectPath}.
\end{example}

  We will focus on one monomial $c_P$ in each term of $f_i$, as follows.

\begin{definition}\label{Def:DirectPath}
  Fix $k \geq 1$. For all $j \leq k$, let $\ell_j$ be the longest length of a
  significant arrow $(j,\ell') \in T^+(M^-)$. We construct a sequence
  $(z_1,z_2,\dots)$ of variables $c_i^{\ell'}$ as follows.  Set
  $m=m_k$, and $l=1$.  If $\ell_k>0$, set $z_1 = c_k^{\ell_k}$,
  $i=j^+(w_kr^{\ell_k})$, and $l=2$.  Otherwise set $i=k-1$.  We now
  iterate.  Given $m,l,i$, if $\ell_i>0$, set $z_l = c_i^{\ell_i}$,
  $i=j^+(w_ir^{\ell_i})$, and $l=l+1$.  Otherwise set $i=i-1$.  This
  procedure stops when $i \leq 0$.

  A path $P$ is called a \emph{direct path from $m_k$} if it is of one
  of the two forms $(z_1,z_2,\ldots,z_s)$, with $s>0$, or
  $(z_1,z_2,\ldots,z_s,c_{i'}^{\ell'})$, with $s\ge0$, where the index $i'$ agrees
  with the index of $z_{s+1}$, and $\ell'<\ell_{i'}$.
\end{definition}

\begin{remark}\label{rem:PropertiesOfDirectPaths}
  Note the following properties:
  \begin{enumerate}
  \item There is at most one direct path from $m_k$ of a given length
    $\ell$. This is because a choice of path is determined by $s$ and
    $\ell'$, and the corresponding path has length
    $\ell=\sum_{i=1}^s \ell_i+\ell'<\sum_{i=1}^{s+1} \ell_i$. We refer
    to this path, when it exists, as $p_{k,\ell}$.
  \item For a fixed $k,$ and a fixed positive significant arrow
    $c_i^{\ell'},$ there is at most one $\ell$ such that $c_i^{\ell'}$
    is the \emph{last} step in a direct path $p_{k,\ell}$, in the
    sense that for any other positive significant arrow
    $c_{i'}^{\ell''}$ in $p_{k,\ell},$ we have $i'>i.$
  \item If $P$ is a direct path from $m_k$ of length $\ell>\ell_k$,
    then the path $P'$ obtained by deleting the first step of $P$ is a
    direct path of length $\ell-\ell_k$ from $m_{j^+(w_kr^{\ell_k})}.$
  \end{enumerate}
\end{remark}

When $S$ has the standard grading, we next show that direct paths of
all possible lengths exist from certain monomials $m_k$.  This uses
the following properties of the lex-most and lex-least ideals.

\begin{remark} 
  In the standard grading $\deg(x)=\deg(y)=1,$ the lex-most ideal
  $M^+$ is the {\em lexicographic ideal}, also known as the {\em
    lexsegment ideal}, with respect to the order $x \succ y$; see
  \cite{BrunsHerzog}*{Chapter 4}.  This is the monomial ideal whose
  degree $d$ part is the span of the $(d+1)-h(d)$ largest monomials in
  lexicographic order.  The lex-least ideal $M^-$ is the lexicographic
  ideal for the opposite order of the variables $x \prec y$.  A
  monomial ideal is lex-least with respect to the standard grading if
  and only if the rows of its Young diagram are \emph{strictly}
  decreasing in length, and similarly is lex-most if and only if the
  columns of its Young diagram are strictly decreasing in length.
  This means that for $M^-$ we have $m_k = x^iy^k$ for some $i$, so
  $m_kr^{\ell}\in S$ for $0 \leq \ell \leq k$.  We also have by
  symmetry that if $M^-$ and $M^+$ are the lex-least and lex-most
  monomial ideals with a given Hilbert function $h$ respectively, then
  the Young diagrams of $M^-$ and $M^+$ are transposes of each other.
\end{remark}

Another standard-graded fact about $M^-$ that we need, which is not
true for nonstandard gradings, is that $w_k/m_{k-1} = y$ for all
$k \geq 1$.

\begin{proposition}  \label{p:coeffsoffsubk}
Fix the standard $(1,1)$-grading for $S$.  Fix $k \ge 0$ with $m_kr
\in S \setminus M^-$.  If for some $0 < \ell \leq k$ we have that
$m_kr^{\ell} \in S \setminus  M^-$, then there is a direct path of length
$\ell$ from $m_k$.
\end{proposition}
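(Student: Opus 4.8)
The plan is to induct on $k$, using Remark~\ref{rem:PropertiesOfDirectPaths}(3) to peel off the first step of a direct path. First I would dispose of the degenerate structure: because $M^-$ is lex-least in the standard grading, $m_k = x^i y^k$ for the appropriate $i$, and $w_k/m_{k-1} = y$, so $w_k = x^{i'} y^k$ where $x^{i'} = \operatorname{lcm}(m_{k-1}, m_k)/\text{(}y\text{-part)}$; concretely $w_k r^\ell = x^{i'+\ell} y^{k-\ell}$ and $m_k r^\ell = x^{i+\ell} y^{k-\ell}$. The hypothesis $m_k r \in S\setminus M^-$ and $m_k r^\ell \in S\setminus M^-$ (for the given $\ell \le k$) is exactly what makes the arrows available: I would first check that $\ell_k$, the longest length of a significant arrow $(k,\ell')\in T^+(M^-)$, is well-defined and positive, i.e. that $T^+(M^-)$ contains \emph{some} arrow out of $m_k$. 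This follows because $m_k r \in S\setminus M^-$ while $w_k r = x^{i'+1}y^{k-1}$ — and since $i' \ge i+1$ (as $w_k$ is a proper multiple of $m_k$ in the $x$-direction whenever $k\ge 1$), one shows $w_k r \in M^-$, so $(k,1)\in T^+(M^-)$ and hence $\ell_k \ge 1$.

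Next comes the inductive step. If $\ell \le \ell_k$: when $\ell = \ell_k$ the direct path $p_{k,\ell}$ is just the single arrow $(z_1) = (c_k^{\ell_k})$, and when $\ell < \ell_k$ I claim $(k,\ell) \in T^+(M^-)$ itself — because $m_k r^\ell \in S\setminus M^-$ by hypothesis, and $w_k r^\ell \in M^-$ since $w_k r^{\ell_k} \in M^-$ and $\ell < \ell_k$ means $w_k r^\ell$ has a larger $y$-exponent but... here I must be careful: going from $\ell_k$ down to $\ell$ increases the $y$-power, so $w_k r^\ell$ need not lie in $M^-$ just because $w_k r^{\ell_k}$ does. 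The correct statement is that $(k,\ell)$ with $\ell < \ell_k$ is a significant arrow precisely when $w_k r^\ell \in M^-$, and the path $p_{k,\ell}$ of the form $(c_{i'}^{\ell'})$ with $s = 0$ handles exactly this case — so I would verify that for $0 < \ell < \ell_k$ one has $w_k r^\ell \in M^-$, using that $w_k = x^{i'}y^k$ with $i'$ large enough (since $(k,\ell_k)$ being significant forces $w_k r^{\ell_k} = x^{i'+\ell_k}y^{k-\ell_k}\in M^-$, and decreasing $\ell$ keeps the $x$-exponent $i'+\ell \ge i'$ while the relevant obstruction for membership in $M^-$ involves comparing against the staircase — this needs the strict-decrease property of rows of $M^-$). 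If instead $\ell > \ell_k$: set $m' = m_{j^+(w_k r^{\ell_k})}$, which by the standard-graded structure is again of the form $x^{i''}y^{k'}$ with $k' = k - \ell_k$; I would check that $m' r \in S\setminus M^-$ and $m' r^{\ell - \ell_k} \in S\setminus M^-$ with $\ell - \ell_k \le k'$, apply the inductive hypothesis to get a direct path of length $\ell - \ell_k$ from $m'$, and prepend the arrow $(k,\ell_k)$ via Remark~\ref{rem:PropertiesOfDirectPaths}(3) to obtain $p_{k,\ell}$.

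The main obstacle will be the bookkeeping in the $\ell > \ell_k$ case: verifying that $m' r^{\ell-\ell_k}\in S\setminus M^-$ follows from $m_k r^\ell \in S\setminus M^-$. The point is that $m_k r^\ell$ and $m' r^{\ell - \ell_k}$ differ only in their $x$-exponents (both have $y$-exponent $k-\ell$), with $m'$ having the \emph{smaller} $x$-exponent among monomials of its $y$-degree that are outside $M^-$ — indeed $j^+(w_k r^{\ell_k})$ is chosen so that $m_{j^+}$ divides $w_k r^{\ell_k}$, putting $m'$ at a corner of the staircase — so membership of $m' r^{\ell-\ell_k}$ in $S\setminus M^-$ should be \emph{easier} than that of $m_k r^\ell$, not harder. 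I would make this precise by noting that $m_k$ divides $m_k r^{\ell_k}\cdot(\text{something})$... more cleanly: since $m' \preceq$-divides into the staircase at least as deeply as $m_k r^{\ell_k}$ does, and $r$-translation by $\ell - \ell_k$ steps of a non-ideal monomial of $y$-degree $k-\ell_k$ lands outside $M^-$ exactly when the corresponding translate of $m_k$ does (both controlled by the single row of the Young diagram at height $k - \ell$), the two conditions are equivalent. Finally I would note $\ell - \ell_k \le k - \ell_k = k'$, so the inductive hypothesis applies, completing the argument; the base case $k = 0$ is vacuous since then $m_0 r \notin S$.
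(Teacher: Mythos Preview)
Your overall strategy---induction on $k$ with a case split on $\ell \le \ell_k$ versus $\ell > \ell_k$---matches the paper's proof exactly. However, there is a genuine gap in your inductive step when $\ell > \ell_k$.

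You try to deduce $m_{k'} r^{\ell - \ell_k} \in S \setminus M^-$ from $m_k r^\ell \in S \setminus M^-$ by comparing $x$-exponents, asserting that $m_{k'} r^{\ell - \ell_k}$ has the smaller one. This is false. Writing $m_j = x^{a_j} y^j$, the arrow $(k,\ell_k)\in T^+(M^-)$ gives $m_k r^{\ell_k} = x^{a_k + \ell_k} y^{k'} \in S \setminus M^-$, which forces $a_k + \ell_k < a_{k'}$. Hence $m_{k'} r^{\ell - \ell_k}$ has $x$-exponent $a_{k'} + \ell - \ell_k > a_k + \ell$, strictly \emph{larger} than that of $m_k r^\ell$ at the same $y$-degree $k-\ell$. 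The implication you want does not follow, and the two conditions you call ``equivalent'' are not.

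The paper fixes this by routing through $w_k r^\ell$ instead of $m_k r^\ell$. Since $\ell > \ell_k$, the pair $(k,\ell)$ is not in $T^+(M^-)$; combined with $m_k r^\ell \in S \setminus M^-$, this forces $w_k r^\ell \in S \setminus M^-$. Now $m_{k'}$ divides $w_k r^{\ell_k}$ by definition of $j^+$, say $w_k r^{\ell_k} = x^i m_{k'}$, so $m_{k'} r^{\ell - \ell_k}$ divides $w_k r^\ell$ and is therefore outside $M^-$. The same idea handles the verification $m_{k'} r \in S \setminus M^-$ that you left as ``I would check'': if $m_{k'} r$ were in $M^-$, then $w_k r^{\ell_k + 1} = x^i m_{k'} r \in M^-$, making $(k, \ell_k + 1)$ a significant arrow and contradicting the maximality of $\ell_k$.

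Your hesitation in the case $\ell < \ell_k$ is misplaced, though here you are only overcautious rather than wrong: since $M^-$ is the lexsegment ideal with respect to $x \prec y$, its degree-$d$ piece consists of the \emph{largest} monomials. Thus $w_k r^{\ell_k} \in M^-$ and $w_k r^{\ell_k} \prec w_k r^\ell$ (same degree, larger $y$-exponent) immediately give $w_k r^\ell \in M^-$, so $(k,\ell) \in T^+(M^-)$ and the single arrow $(c_k^\ell)$ is the required direct path.
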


\begin{proof}
  The proof is by induction on $k$.  When $k=0$, there is no such
  $\ell$, so the claim holds.  Now assume that the claim is true for
  all $k'<k$.  Let $\ell'$ be maximal such that $(k,\ell') \in
  T^+(M^-)$.  If $\ell' \geq \ell$, then we claim that $(k,\ell) \in
  T^+(M^-)$.  This follows from the fact that $w_kr^{\ell'} \in M^-$,
  so since $w_kr^{\ell'} \preceq w_k r^{\ell}$, we have $w_kr^{\ell}
  \in M^-$.  In this case $c_k^{\ell}$ is the required direct path.
  Otherwise, $(k,\ell)\not\in T^+(M^-)$, so $w_kr^{\ell}\in S\setminus
  M^-$. Let $k'=j^+(w_kr^{\ell'}) = k-\ell'$.  We
  have $w_kr^{\ell'}=x^im_{k'}$ for some $i \geq 0$.  Thus
  $m_{k'}r^{\ell-\ell'}x^i = w_kr^{\ell}$, so since $w_kr^{\ell} \in S \setminus 
   M^-$, the same is true for $m_{k'}r^{\ell-\ell'}$, and $\ell -
  \ell' \leq k'$.  Since $\ell'<\ell \leq k$, we have $k'>1$ and
  $\ell'+1\leq k$.  This means that $w_kr^{\ell'+1}\in S$, and
  $m_{k'}r\in S\setminus M^-$, as otherwise we would have
  $m_{k'}x^ir=w_kr^{\ell'+1}\in M^-$, so $(k,\ell'+1)$ would be in
  $T^+(M^-)$. By induction there is a direct path $c_P$ from $m_{k'}$
  of length $\ell-\ell'$, so $c_k^{\ell'}c_P$ is a direct path of
  length $\ell$ from $m_k$.
\end{proof}

\subsection{The structure of the Macaulay
  matrix} \label{sec:MacaulayMatrix}

For the rest of this section, we fix the standard $(1,1)$ grading on
$S=K[x,y]$, and a Hilbert function
$h:\mathbb{Z}_{\ge0}\to\mathbb{Z}_{\ge0}.$ Let $I \subseteq
K[c_i^{\ell} : (i,\ell) \in T^+(M^-)][x,y]$ be the ideal of the universal family over 
$C_{\prec}(M^-)$ as in Theorem~\ref{t:evainuniversal}.  Note that there
are $\qq{d}=d+1$ monomials of degree $d$ in $S$.

For any $d\ge0$, and for any basis of $I_d$, we may write the coefficients of the basis as
the columns of a matrix $R$ with entries in $K[C_\prec(M^-)]$; such a
matrix is called a degree-$d$ \emph{Macaulay matrix} for $I$, and has
size $(d+1) \times (d+1- h(d))$.  For any collection $E$ of $h(d)$
monomials of degree $d$, there is a polynomial in $I$ with support in
$E$ if and only if the minor
indexed by rows corresponding to monomials not in $E$
is zero.  The matroid
$\mathcal{M}(I_d)$ is thus exactly characterized by which
maximal minors of $R$ vanish.

We begin by choosing a basis for
$I_d$ via the combinatorial set-up given in Section
\ref{sec:ArrowsAndPaths}.
For each of the $d+1-h(d)$ monomials
$m\in M^-_d$, the polynomial
$g_m:=\frac{m}{m_{j^-(m)}}f_{j^-(m)}\in I_d$ has initial term $m$ with
respect to $x\prec y$.

As the polynomials
$\{g_m\}_{m\in M^-_d}$ have distinct initial terms, they are all
linearly independent. Since $\dim(I_d)=d+1-h(d)=\abs{M^-_d},$ we
conclude that $\{g_m\}_{m\in M^-_d}$ is a basis for $I_d.$

Let $R$ be the matrix with columns the coefficient vectors of the
polynomials $g_m$.  This is a degree-$d$ Macaulay matrix for $I$.  We
index the rows by $\QQ{d}$ in increasing order with respect to $x\prec
y$, and index the columns by the monomials in $M^-_d$ in increasing
order with respect to $x\prec y$, so $R_{m',m}$ is the coefficient of
$m'$ in $g_m.$

We now make a series of observations about the matrix $R$.

\begin{property}\label{rem:UpperTriangular}
  The matrix $R$ is upper triangular in the following sense. If
  $m'\succ m,$ then $R_{m',m}=0$ as $g_m$ has initial term $m$.
  Since $M^-$ is the lexicographic ideal with respect to $x \prec y$ when $(a,b)=(1,1)$,
  the monomials $m\in M^-_d$ are
  $\prec$-consecutive; this means that the entries $R_{m,m}$ comprise
  a diagonal
  of $R$.   We
  conclude that all entries below this diagonal are zero. The entries
  along this diagonal are all 1, corresponding to the fact that $m_k$
  has coefficient 1 in $f_k$. See Figure \ref{fig:MacaulayMatrixShape}.
\end{property}

\begin{figure}
    \centering
    \begin{align*}
      \begin{array}{ccccccccccc}
          &\multicolumn{10}{c}{\multirow{9}{*}{
          $\begin{pmatrix}
            *&\textcolor{red}{\times}&\cdots&\textcolor{red}{\times}&\textcolor{red}{\times}\\
            *&*&\cdots&\textcolor{red}{\times}&\textcolor{red}{\times}\\
            \vdots&\vdots&\ddots&\vdots&\vdots\\
            *&*&\cdots&*&\textcolor{red}{\times}\\
            1&*&\cdots&*&*\\
            0&1&\cdots&*&*\\
            \vdots&\vdots&\ddots&\vdots&\vdots\\
            0&0&\cdots&1&*\\
            0&0&\cdots&0&1\\
          \end{pmatrix}$
                               }}\\
        &\\
        &\\
          &\\
        {\raisebox{-5pt}{\tiny $m^*\rightarrow$}}&\\
        &\\
        &\\
        &\\
         &\\
         &\\
      \end{array}
    \end{align*}
    \caption{The structure of the Macaulay matrix $R$. In the
      base-change $\bar{R}$ to the ring $K[C_\prec(M^-)]/\langle{Y}\rangle$, the
      $\textcolor{red}{\times}$ entries are zero.}
    \label{fig:MacaulayMatrixShape}
  \end{figure}
\begin{property}\label{rem:CoeffsArePaths}
  Theorem \ref{t:CoefficientsArePaths} gives a combinatorial
  description of the entry $R_{m',m}$. Namely, let $\ell$ be such that
  $m'=mr^\ell.$ Then
  $$R_{mr^\ell,m}=\sum_{\substack{\text{$P$ a path from}\\\text{$m_{j^-(m)}$
        of length $\ell$}}}c_P.$$
\end{property}

\begin{property}\label{rem:UpperArrowsToZero}
  Let $m^*$ be the smallest monomial in $M_d^-$ with respect to $x
  \prec y$.  It will be convenient to consider the entries of $R$ in a
        {\em quotient} of $K[C_\prec(M^-)]$ where some variables
  have been set to zero.  Let
  $Y=\{ c_i^{\ell} : (i,\ell)\in T^+(M^-), \thickspace \thickspace i>j^-(m^*)\}.$ Let
  $\bar{R}$ be the base-change of the matrix $R$ to
  $K[C_\prec(M^-)]/\langle{Y}\rangle$. That is, $\bar{R}$ is a
  Macaulay matrix for the universal ideal over the \emph{coordinate
    subspace} defined by $\langle Y \rangle$ in $C_\prec(M^-)\cong\A^{\abs{T^+(M^-)}}.$

  The reason for using this quotient is as follows. Suppose
  $m\in M^-_d$,
  so $m\succeq m^*$. Then $j^-(m)\ge j^-(m^*).$ By Definition
  \ref{d:Path}, a nonempty path $P$ from $m_{j^-(m)}$ either (1) contains an
  element of $Y$, in which case
  $c_P=0\in K[C_\prec(M^-)]/\langle{Y}\rangle$, or (2) is a path from
  $m_{j^-(m^*)}.$ It follows that
$\bar{R}$ has entries
  $$\bar{R}_{mr^\ell,m}=\sum_{\substack{\text{$P$ a path
        from}\\\text{$m_{j^-(m^*)}$ of length $\ell$}}}c_P.$$ In
  particular, 
  $\bar{R}$ is \emph{lower} triangular in the following sense.  Note that for 
  $\ell_0=j^-(m^*)$ we have that $m_{j^-(m^*)}r^{\ell_0}$ is a power of
  $x$. Then for $\ell'>\ell_0,$ we have
  $\bar{R}_{mr^{\ell'},m}=0$. This implies the vanishing of all
  entries of $\bar{R}$ that lie above the main diagonal; see Figure
  \ref{fig:MacaulayMatrixShape}. Furthermore, it follows from
  Proposition \ref{p:coeffsoffsubk} and Property~\ref{rem:CoeffsArePaths}
  that all entries on the main diagonal are
  nonzero.
\end{property}

\begin{property}\label{rem:MatrixGrading}
  Define a grading on $K[C_\prec(M^-)]$ by $\deg(c_i^\ell)=\ell.$ Then
   $R_{mr^\ell,m}$ is homogeneous of degree $\ell$. In particular,
  the degree is constant along diagonals of $R$, and satisfies
  $\deg(R_{mr,m'})=\deg(R_{m,m'r^{-1}})=\deg(R_{m,m'})+1.$ This also
  implies that for every square submatrix $R'$ of $R$, the minor
  $\det(R')$ is a homogeneous polynomial. Additionally, for any
  \emph{maximal} square submatrix $R'$ the degrees of the diagonal
  entries of $R'$ are a nonincreasing sequence (read starting at the
  top left as usual); this follows from the fact that $R'$ is obtained
  by deleting only rows (and no columns) from $R$. The same holds for
  $\bar{R}$.
\end{property}

\begin{example}
  Consider the monomial ideal $M^-=\langle
  x^6,x^4y,x^2y^2,xy^3,y^4\rangle$. We have $T^-(M^-)=\emptyset.$ The
  chosen degree-4 Macaulay matrix for $C_{\prec}(M^-)$ is
  \begin{align*}
    R=\kbordermatrix{&x^2y^2&xy^3&y^4\\
    x^4&c_1^1 c_2^1+c_2^2 & c_1^1 c_3^2 & c_1^1 c_4^3 \\
      x^3y&c_1^1+c_2^1 & c_1^1 c_2^1+c_2^2+c_3^2 & c_1^1 c_3^2+c_4^3 \\
      x^2y^2&1 & c_1^1+c_2^1 & c_1^1 c_2^1+c_2^2+c_3^2 \\
      xy^3&0 & 1 & c_1^1+c_2^1 \\
    y^4&0 & 0 & 1 }
         .
  \end{align*}
  The base-change to $K[C_\prec(M^-)]/\langle{Y}\rangle$ is
  \begin{align*}
    \bar{R}=
    \kbordermatrix{&x^2y^2&xy^3&y^4\\
    x^4&c_1^1 c_2^1+c_2^2 & 0 & 0 \\
      x^3y&c_1^1+c_2^1 & c_1^1 c_2^1+c_2^2 & 0 \\
      x^2y^2&1 & c_1^1+c_2^1 & c_1^1 c_2^1+c_2^2 \\
      xy^3&0 & 1 & c_1^1+c_2^1 \\
    y^4&0 & 0 & 1 }
         .
  \end{align*}
  
  Observe how the various properties above apply to these matrices:
  \begin{itemize}
  \item Both are upper triangular in the sense of Property
    \ref{rem:UpperTriangular}:  there are zeros below the
    diagonal $m=m'$.
  \item The matrix $\bar{R}$ is lower triangular in the sense of
    Property \ref{rem:UpperArrowsToZero}.
  \item The homogeneity of Property \ref{rem:MatrixGrading} is
    satisfied with $\deg(c_1^1)=\deg(c_2^1)=1$,
    $\deg(c_2^2)=\deg(c_3^2)=2$, and $\deg(c_4^3)=3$.
  \end{itemize}
\end{example}

\subsection{Proof of Theorem \ref{t:tropicalideal}}\label{sec:GenericUniformity}

We now prove:
\begin{theorem}\label{t:MacaulayMatrixMinors}
  Fix the standard $(1,1)$-grading on $S$, and a Hilbert function $h$, and fix $d\ge0$ such that $h(d)<d+1.$
  Then every $(d+1-h(d)) \times (d+1-h(d))$ minor of $R$ is a
  nonzero polynomial in $K[C_\prec(M^-)]$.
\end{theorem}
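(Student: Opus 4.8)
The plan is to reduce the statement to a property of the base-changed matrix $\bar{R}$, and then to verify that property by a direct combinatorial argument using the direct paths constructed in Proposition \ref{p:coeffsoffsubk}. First I would observe that it suffices to show that, for each choice of $d+1-h(d)$ rows of $R$ (equivalently, for each set $E$ of $h(d)$ rows to delete), the corresponding maximal minor is a nonzero element of $K[C_\prec(M^-)]$. Since specialization can only send a nonzero polynomial to zero, it is enough to exhibit \emph{one} specialization of the variables $c_i^\ell$ under which the minor is nonzero. The natural candidate is the specialization to the coordinate subspace $\langle Y\rangle$, i.e. passing from $R$ to $\bar{R}$: if the corresponding maximal minor of $\bar{R}$ is a nonzero polynomial, then so is the minor of $R$. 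Thus the theorem is reduced to the statement that \emph{every} $(d+1-h(d))\times(d+1-h(d))$ minor of $\bar{R}$ is a nonzero polynomial.

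Next I would exploit the triangular structure of $\bar{R}$ recorded in Property \ref{rem:UpperArrowsToZero}: $\bar{R}$ is simultaneously upper triangular (in the sense of Property \ref{rem:UpperTriangular}, zeros strictly below the $m=m'$ diagonal) and lower triangular (zeros strictly above a second diagonal), with all entries on \emph{both} of these diagonals nonzero. In fact $\bar{R}$ is supported on a band: its nonzero entries $\bar{R}_{mr^\ell,m}$ occur only for $\ell_0\le\ell$ going down to $\ell$ such that $mr^\ell$ is in range, where $\ell_0 = j^-(m^*)$. The key point will be that this band structure forces every maximal minor to be nonzero: when we delete $h(d)$ rows from $\bar{R}$, the resulting square matrix $\bar{R}'$ still has a "staircase" shape, and I would argue that the product of the entries along the main diagonal of $\bar{R}'$ is a monomial (or at least a polynomial) that appears in $\det(\bar{R}')$ with no cancellation. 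The homogeneity of Property \ref{rem:MatrixGrading} is what prevents cancellation: each diagonal entry $\bar{R}'_{m_t r^{\ell_t}, m_t}$ is homogeneous of degree $\ell_t$, these degrees form a nonincreasing sequence along the diagonal, and by Proposition \ref{p:coeffsoffsubk} each such entry is a nonzero polynomial containing the direct-path monomial $c_{p_{j^-(m^*),\ell_t}}$ — which is the \emph{unique} monomial of its degree that can appear as a "last step" in the relevant place (Remark \ref{rem:PropertiesOfDirectPaths}(2)). Tracking which monomials in the diagonal product can be cancelled by off-diagonal terms, one sees that the leading direct-path contribution survives, so $\det(\bar{R}')\neq 0$.

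I expect the main obstacle to be the last step: proving that no cancellation occurs in $\det(\bar{R}')$. The diagonal product is a product of sums of path monomials, and a priori the cross terms from off-diagonal entries of $\bar{R}'$ could conspire to cancel everything. The way through this is to fix an appropriate monomial order on $K[C_\prec(M^-)]$ (refining the degree grading $\deg(c_i^\ell)=\ell$) and show that the product of the direct-path monomials $\prod_t c_{p_{j^-(m^*),\ell_t}}$ along the diagonal of $\bar{R}'$ is the unique minimal (or maximal) term of $\det(\bar{R}')$ in this order, using that direct paths of a given length from a fixed generator are unique and that deleting the first step of a direct path yields a direct path from a later generator (Remark \ref{rem:PropertiesOfDirectPaths}(1),(3)). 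Concretely, I would set up an induction on $d$ (or on the number of columns): the band structure of $\bar{R}'$ lets one expand the determinant along the first column or last row, peeling off one diagonal entry at a time, and the induction hypothesis together with the direct-path recursion handles the smaller minor. This reduces everything to checking that a single nonzero diagonal entry — guaranteed by Proposition \ref{p:coeffsoffsubk} — survives, which is exactly where that proposition is used.
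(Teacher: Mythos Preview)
Your proposal follows essentially the same approach as the paper: reduce to $\bar{R}$ by specializing the variables in $Y$ to zero, then show that the product $Q=\prod_t c_{p_{j^-(m^*),\ell_{t,t}}}$ of direct-path monomials along the main diagonal of the chosen maximal submatrix survives in the determinant with coefficient~$1$. Your identification of the main obstacle---ruling out cancellation of $Q$ by off-diagonal terms---is exactly right, and your idea of using a monomial order refining the grading $\deg(c_i^\ell)=\ell$ is what the paper does.

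The one place where your sketch is underspecified, and where the paper's argument is genuinely delicate, is the mechanism that isolates $Q$. The paper does this in two separate steps, and you should be aware that a single monomial order on $K[C_\prec(M^-)]$ does not obviously handle both at once. First, one shows (Claim~\ref{cl:lexicographic2} in the paper) that the multiset of entry-degrees $\{\ell_{j,\sigma(j)}\}$ for $\sigma\neq\id$ is strictly lex-larger, as a partition, than the diagonal multiset $\{\ell_{j,j}\}$; this uses only that the diagonal degrees are nonincreasing and that $\ell_{i,j}$ is strictly increasing in $j$. Second, and more subtly, one must show (Claim~\ref{cl:lexicographic1}) that if $Q$ is refactored as a product of \emph{arbitrary} path monomials $\prod c_{P_i}$ from $m_{j^-(m^*)}$, then the partition of lengths $(l(P_i))$ is lex-\emph{smaller} than $(\ell_{j,j})$. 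This second claim is what actually pins $Q$ to the $\sigma=\id$ term, and its proof uses the specific structure of direct paths (the ``longest-first'' greedy property) rather than just the uniqueness facts in Remark~\ref{rem:PropertiesOfDirectPaths}. Your invocation of Remark~\ref{rem:PropertiesOfDirectPaths}(2) is not quite the right handle here.

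Your alternative induction-by-column-expansion idea is less promising: after deleting $h(d)$ rows, the first column of $\bar{R}'$ need not have a single nonzero entry, so the expansion does not cleanly peel off one factor. The paper avoids induction entirely in favor of the direct lex-order argument above.
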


\begin{proof}
  For convenience, in this proof let $n_0=d+1-h(d)>0$.  As in
  Property \ref{rem:UpperArrowsToZero}, we define $m^*$ to be the
  smallest monomial (with respect to $x\prec y$) in $M^-_d$.  Again as
  in Property \ref{rem:UpperArrowsToZero}, we work with the Macaulay
  matrix $\bar{R}$ over $K[C_\prec(M^-)]/\langle{Y}\rangle$; if a
  minor is nonzero in this ring, it is also nonzero in
  $K[C_\prec(M^-)]$.
  
  Fix an $n_0\times n_0$ submatrix $R'$ of $\bar{R}$.  By Property
  \ref{rem:UpperArrowsToZero}, the $(i,j)$th entry $R'_{i,j}$ of $R'$ is
  of the form
  $$\sum_{\substack{\text{$P$ a path from}\\\text{$m_{j^-(m^*)}$ of
        length $\ell_{i,j}$}}}c_P$$ for some $\{\ell_{i,j}\}_{1\le
    i,j\le n_0}$.  Note that the sum is zero if $\ell_{i,j}<0$.  We have
  $\ell_{j,j} \geq 0$ for all $1 \leq j \leq n_0$ by
  Property~\ref{rem:UpperTriangular}.
  
  The chosen minor is then
  \begin{align}\label{eq:Minor}
    \det(R')=\sum_{\sigma\in S_{n_0}}(-1)^{\sgn(\sigma)}\prod_{j=1}^{n_0}\sum_{\substack{\text{$P$ a path from}\\\text{$m_{j^-(m^*)}$ of
    length $\ell_{j,\sigma(j)}$}}}c_P.
  \end{align}

  By Proposition \ref{p:coeffsoffsubk}, the path
  $p_{j^-(m^*),\ell_{j,j}}$ exists for all $1\le j\le n_0$. Hence we may
  define:
  $$Q=\prod_{j=1}^{n_0}c_{p_{j^-(m^*),\ell_{j,j}}}.$$ 
  Then $Q$ is a monomial in $K[C_\prec(M^-)]/\langle{Y}\rangle$,
  and $Q$ appears as a term of the right side of \eqref{eq:Minor} when
  $\sigma=\id$. 
  We will
  show that in fact, $Q$ appears with coefficient 1 in $\det(R')$,
  with the only contribution coming from that term.

  \begin{claim} \label{cl:lexicographic1} Suppose
    $C = \prod_{j=1}^s c_{p_{k,\ell_i}}$, with
    $\ell_1 \geq \ell_2 \geq \dots \geq \ell_s \geq 0$, and we also
    have $C = \prod_{i=1}^s c_{P_i}$, where each $P_i$ is a path from
    $m_k$, and $l(P_1) \geq l(P_2) \geq \dots l(P_s) \geq 0$.  Then we
    have the following inequality with respect to the lexicographic
    order on $\mathbb Z^s$:
    $$(\ell_1,\dots,\ell_s) \succeq (l(P_1),\dots, l(P_s)).$$
  \end{claim}

  \begin{proof}[Proof of Claim \ref{cl:lexicographic1}]
If $C=1$ then all paths have length zero, and the claim follows.  We
now assume that $C \neq 1$.  The proof is by induction on $s$.  The
base case is $s=1$, in which case we must have $P_1= p_{k,\ell_1}$, so
the inequality is an equality.  Suppose now that $s>1$, and the result
is true for smaller values of $s$.  Recall from
Definition~\ref{Def:DirectPath} that every variable $c_i^{\ell}$
dividing $C$ has $i$ occurring in some $z_n$ as defined there.  Let
$c_i^{\ell'}$ be the variable with $i$ minimal dividing
$c_{P_1}$. Since $c_{P_1}$ divides $C$, we have $c_i^{\ell'}$ dividing
$c_{p_{k,\ell_j}}$ for some $1 \leq j \leq s$.  We claim that the
length of the part of $p_{k,\ell_j}$ before the step $c_i^{\ell'}$ is
at least as long as the part of the path $P_1$ before $c_i^{\ell'}$,
so $\ell_j \geq l(P_1)$, with equality only if $p_{k,\ell_j} =
P_1$. To see this, note that the part of $P_1$ before $c_i^{\ell'}$
contains only variables $c_{i'}^\ell$ where $i'<i$ is the index of
some $z_n$, while the part of $p_{k, \ell_j}$ before $c_i^{\ell'}$
contains every $z_n$ with $i'<i$.  Since the length $\ell$ of
$c_{i'}^{\ell}$ is at most the length of the associated $z_n$, we have
$\ell_j \geq l(P_1)$, and so $\ell_1 \geq l(P_1)$, with equality only
if $j=1$ and $p_{k,\ell_1} = P_1$.  When the inequality is strict we
have the strict inequality $(\ell_1,\dots,\ell_s) \succ
(l(P_1),\dots,l(P_s))$, while otherwise the induction hypothesis
applied to $C/c_{p_{k,\ell_1}}$ yields the desired inequality.
  \end{proof}

  \begin{claim}\label{cl:lexicographic2}
    For $\sigma\in S_{n_0},$ let $\Pi(\sigma)$ denote the integer
    partition
    $\ell_{1,\sigma(1)}+\ell_{2,\sigma(2)}+\cdots+\ell_{n_0,\sigma(n_0)}.$
    We treat $\Pi(\sigma)$ as a nonincreasing list of integers, whose
    sum is the degree of $\det(R')$ with respect to the grading in
    Property \ref{rem:MatrixGrading}. Then for all
    $\sigma\in S_{n_0},$ we have $\Pi(\sigma)\succeq\Pi(\id)$ with
    respect to the lexicographic order on $\mathbb{Z}_{\geq 0}^{n_0},$ with
    equality only if $\sigma=\id$.
  \end{claim}
  \begin{proof}[Proof of Claim \ref{cl:lexicographic2}]
    Suppose $\sigma\ne\id$ and $\Pi(\sigma)\preceq\Pi(\id).$ Let
    $i\in\{1,\ldots,n_0\}$ be minimal such that $\sigma(i)>i.$ Then
    $\ell_{1,1},\ldots,\ell_{i-1,i-1}$ are parts of both $\Pi(\sigma)$
    and $\Pi(\id).$ By Property \ref{rem:MatrixGrading}, $\ell_{j,j}$
    is nonincreasing as $j$ increases, so
    $\ell_{1,1},\ldots,\ell_{i-1,i-1}$ are the $i-1$ largest parts of
    $\Pi(\id)$. Since $\Pi(\sigma)\preceq\Pi(\id),$ we must have that
    $\ell_{1,1},\ldots,\ell_{i-1,i-1}$ are the $i-1$ largest parts of
    $\Pi(\sigma)$. The next largest part of $\Pi(\id)$ is
    $\ell_{i,i},$ but we know that $\ell_{i,\sigma(i)}>\ell_{i,i}$,
    since $\sigma(i)>i$ and, by Property
    \ref{rem:MatrixGrading}, $\ell_{i,j}$ strictly increases as $j$
    increases. This contradicts $\Pi(\sigma)\preceq\Pi(\id).$
  \end{proof}

  Claims \ref{cl:lexicographic1} and \ref{cl:lexicographic2} together
  show that in the sum \eqref{eq:Minor}, the monomial $Q$ appears only
  in the term $\sigma=\id,$ which is the product
  \begin{align}\label{eq:DiagonalProduct}
    \prod_{j=1}^{n_0}R'_{j,j}&=\prod_{j=1}^{n_0}\sum_{\substack{\text{$P$ a path from}\\\text{$m_{j^-(m^*)}$ of
    length $\ell_{j,j}$}}}c_P
  \end{align}
  Finally, we argue that the coefficient of $Q$ in
  \eqref{eq:DiagonalProduct} is 1. Order the variables $c_i^{\ell}$ so
  that $c_i^{\ell} \succ c_j^{\ell'}$ if $i>j$ or $i=j$ and
  $\ell>\ell'$. Then $c_{p_{j^-(m^*),\ell_{j,j}}}$ is the largest
  monomial $c_P$ in the resulting lexicographic order, when $P$ varies
  over all paths from $m_{j^-(m^*)}$ of length $\ell_{j,j}$. The initial
  term of $R'_{j,j}$ is thus $c_{p_{j^-(m^*),\ell_{j,j}}}$ with
  coefficient 1. The initial term of the product
  \eqref{eq:DiagonalProduct} is the product of the initial terms,
  namely $Q$. Thus $Q$ appears in $\det(R')$ with coefficient 1, so we
  conclude that $\det(R')$ is a nonzero element of
  $K[C_\prec(M^-)]$, for any field $K$.
\end{proof}

Theorem~\ref{t:MacaulayMatrixMinors} is the key to proving Theorem~\ref{t:tropicalideal}.

\begin{proof}[Proof of Theorem~\ref{t:tropicalideal}]
  For an ideal $I\in\Hilb_S^h$, we have
  $\mathcal{M}(I_d)=U_{h(d),d+1}$ for all $d\ge0$ if and
  only all maximal minors of all Macaulay matrices for $I$ are nonzero in degrees where $h(d)>0$.
  By Theorem \ref{t:MacaulayMatrixMinors}, these minors are {\em nonzero} polynomials in $K[C_{\prec}(M)]$, so
    $\mathcal{M}(I_d)=U_{h(d),d+1}$ for all $d\ge0$ if and only if 
  $I$ is in the complement of the vanishing sets of these finitely many
 polynomials.
 The set of such $I$ forms a nonempty open subset of $C_{\prec}(M^-),$
 and hence of $\Hilb_S^h.$ This implies the main claim of Theorem
 \ref{t:tropicalideal}; the second claim in Theorem
 \ref{t:tropicalideal} follows from the standard fact that if $K$ is
 an infinite field, then any nonempty open subset of $\mathbb{A}^n_K$
 contains a $K$-point.
\end{proof}

\begin{remark}
The proofs of Theorems \ref{t:spineedge} and \ref{t:tropicalideal} rely on $K$ being infinite, but we do not know of a counterexample to either one with $K$ finite.
\end{remark}

\begin{remark}\label{rem:InitialRowReduction}
  If $I\in\Hilb_S^h$ satisfies the conclusion of Theorem
  \ref{t:tropicalideal}, then necessarily $\inn_{\prec}(I)=M^-$ and
  $\inn_{\prec^{opp}}(I)=M^+$; that is, $I\in E(M^-,M^+).$ Indeed, for $I\in C_{\prec}(M^-),$
  $\inn_{\prec}(I)_d$ is the span of the monomials corresponding to
  leading ones in the reduced column-echelon form of $R$. Thus the
  condition $\inn_{\prec^{opp}}(I)=M^+$ is equivalent to the
  nonvanishing of a \emph{single} maximal minor of $R$.
  This is a strictly weaker condition than the nonvanishing of \emph{all}
  maximal minors, 
  as guaranteed by Theorem \ref{t:tropicalideal}.
\end{remark}
\begin{remark}
Theorem \ref{t:tropicalideal} determines $\mathcal{M}(I_d)$ when $I$ is a general element of $E(M^-,M^+)$. It is natural to ask if the theorem can be generalized to determine the matroid of a general element of an arbitrary edge-scheme $E(M,M')$, at least when $E(M,M')$ is irreducible. In small examples, even when $E(M,M')$ is irreducible, $\mathcal{M}(I_d)$ is often non-uniform. For example, this occurs in $N=6$, in the edge in Figure \ref{fig:spine6} connecting $(4,1,1)$ and $(3,1,1,1),$ where $\mathcal{M}(I_2)$ has $xy$ as a loop. 
\end{remark}

\subsection{Discussion of other gradings} \label{sec:Nonstandard}

In this section we show that Theorem~\ref{t:tropicalideal} does not
hold for all edges in the spine, so the standard-graded hypothesis
is necessary.

We first note that the degree-$d$ matroid can have {\em loops} and
{\em coloops} in degrees where the entire matroid is not trivial.

\begin{example}
  Let $(a,b)=(2,3)$. Then the two monomial ideals $M^-=(x^7,xy,y^4)$
  and $M^+=(x^6,xy,y^5)$ share a Hilbert function $h$. (Here $N=10$.) The ideal $M^-$
  has the unique positive significant arrow $(2,2)$, and the universal
  ideal $I$ over $C_{\prec}(M^-)$ is thus
  $\langle x^7,xy,y^4+c_2^2x^6\rangle.$

  The degree-12 Macaulay matrix is
  $$\kbordermatrix{& x^3y^2 & y^4\\ x^6 & 0 & c_2^2 \\ x^3y^2 & 1 & 0
    \\ y^4 & 0 & 1 \\}.$$ The matroid $\mathcal M(I_{12})$ on
  ground set $\{x^6,x^3y^2,y^4\}$ has circuits
  $\{\{x^3y^2\},\{x^6,y^4\}\}.$ In particular, $\mathcal M(I_{12})$ is not a
  uniform matroid, due to the existence of the loop $x^3y^2.$ This
  loop is forced to exist 
since $h(5)=0,$ so $xy\in I$, and thus $x^3y^2 \in I$,  for any ideal $I$ with Hilbert function $h$.

  Furthermore, the degree-8 Macaulay matrix is
  $$\kbordermatrix{& xy^2 \\ x^4 & 0 \\ xy^2 & 1 \\},$$ so the matroid
  $\mathcal M(I_8)$ on ground set $\{x^4,xy^2\}$ has the unique
  circuit $\{xy^2\}.$ Again, $\mathcal M(I_8)$ is not a uniform
  matroid.  In addition to the loop $xy^2$, there is also the coloop
  $x^4,$ which is forced to exist by the structure of $h$. To see
  this, note that since $xy\in I$ as noted above, we have $xy^2\in I$.
  As $h(8)=1,$ we must have $x^4 \not \in I$ for any ideal $I$ with Hilbert function $h$, so the matroid $\mathcal M(I_8)$ has a coloop.
\end{example}

We now see, however, that loops and coloops do not entirely account
for the failure of Theorem \ref{t:tropicalideal}.

\begin{example}\label{ex:MajorCounterexample}
  Let $(a,b) = (2,3)$.  Let $h$ be the Hilbert function of the
  monomial ideal $M^- = \langle x^{10}, x^7y, x^2y^3,xy^5,y^6
  \rangle$. Then $M^+=\langle x^9, x^5y, x^4y^3,xy^5,y^7 \rangle$. (Here $N=29.$) The ideal $M^-$ has the positive significant arrows
  $$T^+(M^-) = \{ (2,1), (4,2), (4,3) \}.$$
  Thus the universal ideal $I$ over $C_{\prec}(M^-)$ is
  $$\langle x^{10},x^7y, x^2y^3+c_2^1 x^5y, xy^5+c_2^1x^4y^3, y^6+c_2^1x^3y^4+c_4^2x^6y^2+c_4^3x^9 \rangle.$$
  The degree-$18$ Macaulay matrix is 
  $$\kbordermatrix{& x^3y^4 & y^6\\ x^9 &  0 & c_4^3 \\ x^6y^2 & c_2^1 & c_4^2 \\ x^3y^4 & 1 & c_2^1 \\ y^6 & 0 & 1
    \\ }.$$   The degree-$18$ matroid of $I$ thus has rank $2$ on the ground set $\{x^9, x^6y^2,  x^3y^4, y^6 \}$, with circuits
  $$\{ \{x^3y^4, x^6y^2\}, \{x^9, x^3y^4, y^6\}, \{x^9, x^6y^2, y^6\} \}.$$
  This is not the uniform matroid, and does not have any loops or
  coloops. This is the smallest example we know in which a matroid appears that is not the direct sum of a uniform matroid with a collection of loops and coloops.
\end{example}
\begin{remark}
  In Example \ref{ex:MajorCounterexample}, 
  $\trop(I)$ is ``maximally general'', in the following sense. Let
  $J \subseteq \mathbb B[x,y]$ be any tropical ideal with Hilbert function $h$, in
  the sense of \cite{TropicalIdeals}. Then for all $d\ge0,$ the
  matroid $\mathcal M(J_d)$ is a weak image of $\trop(I)_d.$
\end{remark}

\begin{bibdiv}

\begin{biblist}*{labels={alphabetic}}

  \bib{AltmannSturmfels}{article}{
    author={Altmann, Klaus},
    author={Sturmfels, Bernd},
    title={The graph of monomial ideals},
    journal={J. Pure Appl. Algebra},
    volume={201},
    date={2005},
    number={1-3},
    pages={250--263},
    issn={0022-4049},
%    review={\MR{2158758}},
%    doi={10.1016/j.jpaa.2004.12.030},
  }

\bib{AndersonRincon}{unpublished}{
  title={Paving tropical ideals},
  author={Anderson, Nicholas},
  author={Rinc\'on, Felipe},
    year={2021},
    note={arXiv:2102.09848}
}

\bib{BialynickiBirula}{article}{
  author={Bia\l ynicki-Birula, Andrzej},
  title={Some theorems on actions of algebraic groups},
  journal={Ann. Math.},
  volume={98},
  date={1973},
  number={3},
  pages={480--497},
  }

\bib{Brosnan}{article}{
  author={Brosnan, Patrick},
  title={On motivic decompositions arising from the method of Bia\l
    ynicki-Birula},
  journal={Invent. Math.},
  volume={161},
  date={2005},
  number={1},
  pages={91--111},
  issn={0020-9910},
%  review={\MR{2178658}},
%  doi={10.1007/s00222-004-0419-7},
}

\bib{BrunsHerzog}{book}{
  author={Bruns, Winfried},
  author={Herzog, J\"{u}rgen},
  title={{C}ohen-{M}acaulay rings},
  series={Cambridge Studies in Advanced Mathematics},
  volume={39},
  publisher={Cambridge University Press, Cambridge},
  date={1993},
  pages={xii+403},
  isbn={0-521-41068-1},
%  review={\MR{1251956}},
}

\bib{CLO}{book}{
  author={Cox, David A.},
  author={Little, John},
  author={O'Shea, Donal},
  title={Ideals, varieties, and algorithms},
  series={Undergraduate Texts in Mathematics},
  edition={4},
  note={An introduction to computational algebraic geometry and commutative
    algebra},
  publisher={Springer, Cham},
  date={2015},
  pages={xvi+646},
  isbn={978-3-319-16720-6},
  isbn={978-3-319-16721-3},
%  review={\MR{3330490}},
%  doi={10.1007/978-3-319-16721-3},
  }

  \bib{Evain}{article}{
    author={Evain, Laurent},
    title={Irreducible Components of the Equivariant Punctual {H}ilbert Schemes},
    journal={Adv. Math.},
    volume={185},
    date={2004},
    number={2},
    pages={328--346}
  }

  \bib{FinkGiansiracusaGiansiracusa}{article}{
    author={Fink, Alex},
    author={Giansiracusa, Jeffrey},
    author={Giansiracusa, Noah},
    title={Projective hypersurfaces in tropical scheme theory},
    note={In preparation}
    }

\bib{Fogarty}{article}{
 author={Fogarty, John},
 title={Algebraic families on an algebraic surface},
 journal={Amer. J. Math.},
 volume={90},
 date={1968},
 pages={511--521},
 issn={0002-9327}
}

\bib{GGMS}{article}{
  author={Gel\cprime fand, Israel. M.},
  author={Goresky, R. Mark},
  author={MacPherson, Robert D.},
  author={Serganova, Vera V.},
  title={Combinatorial geometries, convex polyhedra, and {S}chubert cells},
  journal={Adv. in Math.},
  volume={63},
  date={1987},
  number={3},
  pages={301--316},
  issn={0001-8708},
%  review={\MR{877789}},
%  doi={10.1016/0001-8708(87)90059-4},
  }

\bib{Giansiracusa2}{article}{
  author={Giansiracusa, Jeffrey},
  author={Giansiracusa, Noah},
  title={Equations of tropical varieties},
  journal={Duke Math. J.},
  volume={165},
  date={2016},
  number={18},
  pages={3379--3433},
  issn={0012-7094},
%  review={\MR{3577368}},
%  doi={10.1215/00127094-3645544},
}

\bib{GKM}{article}{
  author={Goresky, Mark},
  author={Kottwitz, Robert},
  author={MacPherson, Robert},
  title={Equivariant cohomology, {K}oszul duality, and the localization
    theorem},
  journal={Invent. Math.},
  volume={131},
  date={1998},
  number={1},
  pages={25--83},
  issn={0020-9910},
%  review={\MR{1489894}},
%  doi={10.1007/s002220050197},
  }

\bib{M2}{unpublished}{
  label={GS},
  author = {Daniel R. Grayson and Michael E. Stillman},
  title = {Macaulay2, a software system for research in algebraic geometry},
  note = {Available at \url{http://www.math.uiuc.edu/Macaulay2/}}
}

\bib{HaimanQTCatalan}{article}{
    author={Haiman, Mark},
    title={$t,q$-Catalan numbers and the {H}ilbert scheme},
    note={Selected papers in honor of Adriano Garsia (Taormina, 1994)},
    journal={Discrete Math.},
    volume={193},
    date={1998},
    number={1-3},
    pages={201--224},
    issn={0012-365X},
%    review={\MR{1661369}},
%    doi={10.1016/S0012-365X(98)00141-1},
  }

  \bib{HaimanSturmfels}{article}{
    author={Haiman, Mark},
    author={Sturmfels, Bernd},
    title={Multigraded {H}ilbert schemes},
    journal={J. Algebraic Geom.},
    volume={13},
    date={2004},
    pages={725-769},
    issn={1056-3911},
%    review={\MR{2988580}},
%    doi={10.1080/10586458.2012.659569},
  }

  \bib{Hartshorne}{article}{
    author={Hartshorne, Robin},
    title={Connectedness of the {H}ilbert scheme},
    journal={Inst. Hautes \'{E}tudes Sci. Publ. Math.},
    number={29},
    date={1966},
    pages={5--48},
    issn={0073-8301},
%    review={\MR{213368}},
  }
  
  \bib{HeringMaclagan}{article}{
    author={Hering, Milena},
    author={Maclagan, Diane},
    title={The $T$-graph of a multigraded {H}ilbert scheme},
    journal={Exp. Math.},
    volume={21},
    date={2012},
    number={3},
    pages={280--297},
    issn={1058-6458},
%    review={\MR{2988580}},
%    doi={10.1080/10586458.2012.659569},
  }

  \bib{MaclaganRinconValuations}{article}{
    author={Maclagan, Diane},
    author={Rinc\'{o}n, Felipe},
    title={Tropical schemes, tropical cycles, and valuated matroids},
    journal={J. Eur. Math. Soc. (JEMS)},
    volume={22},
    date={2020},
    number={3},
    pages={777--796},
    issn={1435-9855},
%    review={\MR{4055988}},
%    doi={10.4171/jems/932},
    }

\bib{TropicalIdeals}{article}{
   author={Maclagan, Diane},
   author={Rinc\'{o}n, Felipe},
   title={Tropical ideals},
   journal={Compos. Math.},
   volume={154},
   date={2018},
   number={3},
   pages={640--670},
   issn={0010-437X},
}

\bib{Balancing}{unpublished}{
  author={Maclagan, Diane},
   author={Rinc\'{o}n, Felipe},
   title={Varieties of tropical ideals are balanced},
   note={arXiv:2009.14557}
   }

\bib{MaclaganSmith}{article}{
   author={Maclagan, Diane},
   author={Smith, Gregory G.},
   title={Smooth and irreducible multigraded {H}ilbert schemes},
   journal={Adv. Math.},
   volume={223},
   date={2010},
   number={5},
   pages={1608--1631},
   issn={0001-8708},

 }

\bib{Oxley}{book}{
  author={Oxley, James},
  title={Matroid theory},
  series={Oxford Graduate Texts in Mathematics},
  volume={21},
  edition={2},
  publisher={Oxford University Press, Oxford},
  date={2011},
  pages={xiv+684},
     isbn={978-0-19-960339-8},
 }

\bib{PeevaStillman}{article}{
  author={Peeva, Irena},
  author={Stillman, Mike},
  title={Connectedness of Hilbert schemes},
  journal={J. Algebraic Geom.},
  volume={14},
  date={2005},
  number={2},
  pages={193--211},
  issn={1056-3911},
%  review={\MR{2123227}},
%  doi={10.1090/S1056-3911-04-00386-8},
  }

  \bib{SilversmithTropicalIdeal}{article}{
    author={Silversmith, Rob},
    title={The matroid stratification of the Hilbert scheme of points on
      $\Bbb{P}^1$},
    journal={Manuscripta Math.},
    volume={167},
    date={2022},
    number={1-2},
    pages={173--195},
    issn={0025-2611}
%    review={\MR{4359504}},
%    doi={10.1007/s00229-021-01280-z},
    }

  \bib{Zajaczkowska}{thesis}{
    author = {Zajaczkowska, Magdalena Anna},
    title = {Tropical Ideals with {H}ilbert Function Two},
    year = {2018},
    school = {University of Warwick},
  }
\end{biblist}

\end{bibdiv}

\end{document}